%
%
%

\documentclass{sapm}

\usepackage{graphicx}

\jname{STUDIES IN APPLIED MATHEMATICS}
\jvol{AA}
\jyear{YYYY}
\doi{10.1111/((please add article doi))}

\setlength{\textwidth}{15cm}
\setlength{\oddsidemargin}{-.2cm}
\setlength{\evensidemargin}{-.2cm}

\usepackage{amsmath}
\usepackage{amssymb}
\usepackage{esvect}

\newcommand\nonu{\nonumber}
\newcommand\sLP{\\[\smallskipamount]}
\newcommand\mLP{\\[\medskipamount]}
\newcommand\ZZ{\mathbb{Z}}
\newcommand\FSA{\mathcal{A}}
\newcommand\FSH{\mathcal{H}}
\newcommand{\DAHA}{\FSH\mkern-6mu\FSH}
\newcommand\FST{\mathcal{T}}
\newcommand\al\alpha
\newcommand\be\beta
\newcommand\ga\gamma
\newcommand\de\delta
\newcommand\ep\varepsilon
\newcommand\la\lambda
\newcommand\si\sigma
\newcommand\om\omega
\newcommand\La\Lambda
\newcommand\half{\frac12}
\newcommand\thalf{\tfrac12}
\newcommand\iy\infty
\newcommand\td{\tilde}
\newcommand\wh{\widehat}
\newcommand\wt{\widetilde}
\newcommand\lan{\langle}
\newcommand\ran{\rangle}
\newcommand\bma{\begin{pmatrix}}
\newcommand\ema{\end{pmatrix}}
\newcommand{\qhyp}[5]{\,\mbox{}_{#1}\phi_{#2}\!\left(
  \genfrac{}{}{0pt}{}{#3}{#4};#5\right)}
\newcommand\AW{{\rm AW}}


\begin{document}

\title{Dualities in the $q$-Askey scheme and degenerate DAHA}

\author[T. H. Koornwinder and M. Mazzocco]{Tom H.
Koornwinder\thanks{Korteweg-de Vries Institute for Mathematics,
University of Amsterdam, email: T.H.Koornwinder@uva.nl} and Marta
Mazzocco\thanks{School of Mathematics, The University of Birmingham,
email: m.mazzocco@bham.ac.uk}}
\affil{Korteweg-de Vries Institute for Mathematics,
University of Amsterdam, P.O. Box 94248, 1090 GE Amsterdam, Netherlands}
\affil{School of Mathematics, The University of Birmingham, Edgbaston,
Birmingham B15 2TT, United Kingdom}
\maketitle

\begin{abstract}
The Askey-Wilson polynomials are a four-parameter family of orthogonal
symmetric Laurent polynomials $R_n[z]$ which are eigenfunctions of a
second-order $q$-difference operator $L$, and of a second-order
difference operator in the variable $n$ with eigenvalue $z +
z^{-1}=2x$. Then $L$ and multiplication by $z+z^{-1}$ generate the
Askey-Wilson
(Zhedanov) algebra. A nice property of the Askey-Wilson polynomials is
that the variables $z$ and $n$ occur in the explicit expression
in a similar and to some extent exchangeable way.
This property is called duality. It returns in the non-symmetric case
and in the underlying algebraic structures:
the Askey-Wilson algebra and the double affine Hecke algebra (DAHA).
In this paper we follow the degeneration of the
Askey-Wilson polynomials until two arrows down and in four
different situations: for the orthogonal
polynomials themselves, for the degenerate Askey-Wilson algebras,
for the non-symmetric polynomials and
for the (degenerate) DAHA and its representations.
\end{abstract}

\section{Introduction}
The Askey-Wilson (briefly AW) polynomials \cite{AW} are a four-parameter
family of orthogonal polynomials which are eigenfunctions of
a second-order $q$-difference operator $L$, and which are explicitly
expressed as (terminating) basic hypergeometric series \cite{GR}.
We will write them as symmetric Laurent polynomials $R_n[z]$ of
degree $n$.
As orthogonal polynomials they satisfy a three-term recurrence relation.
In other words, $R_n[z]$ is also an eigenfunction with eigenvalue
$z+z^{-1}$ of a second-order difference operator in the variable $n$.
The operator $L$ and the operator of multiplication by $z+z^{-1}$,
both acting on symmetric Laurent polynomials $f[z]$, generate the
{\em Zhedanov} or {\em AW algebra} \cite{Zhe}, which can be
presented by generators and simple relations.

The idea of non-symmetric special functions, which yield (usually
orthogonal) symmetric special functions by symmetrization, started
with the introduction of the Dunkl operators \cite{Du89}, which
are differential-reflection operator associated with a root system.
These were generalized to Dunkl-Cherednik operators $Y$, which
are $q$-difference-reflection operators associated with root systems,
and which appear in the basic (or polynomial) representation of the
{\em double affine Hecke algebra} (DAHA) \cite{Cher}.
Non-symmetric Macdonald polynomials arose as eigenfunctions
of these operators $Y$. A more general DAHA \cite{Sa}
yielded non-symmetric Macdonald-Koornwinder polynomials.
In the rank one case this is the DAHA of type $(\check C_1,C_1)$
(the AW DAHA), which yields the non-symmetric AW polynomials \cite{NS}.
Furthermore, the AW algebra and the AW DAHA are closely connected.
A central extension of the AW algebra can be embedded
\cite{K}, \cite{Terw2} in the AW DAHA, while conversely the
AW algebra is isomorphic \cite{K1} to the spherical subalgebra of
the AW DAHA.

A nice property of AW polynomials $R_n[z]$, directly visible in the
$q$-hypergeometric expression, is that the variables $z$ and $n$
occur in a similar and to some extent exchangable way (completely
exchangable in the corresponding discrete $q$-Racah case
\cite[Section 14.2]{KLS} and in the case of the Askey-Wilson functions
\cite{KS2}, see also Remark \ref{160} and \S\ref{162}).
This property is called {\em duality}. It returns in the non-symmetric
case and in the underlying algebraic structures of the AW algebra and
the AW DAHA. Therefore the duality extends to the operators
occurring in the basic representations of these algebraic
structures and having the symmetric or non-symmetric AW polynomials
as eigenfunctions.

The AW polynomials are on top of the $q$-Askey scheme and
(by letting $q\to1$) the Askey scheme \cite[Ch.~9, 14]{KLS}.
The orthogonal polynomials in the ``lower'' families are limits
of AW polynomials (or $q$-Racah polynomials).
There are also \cite{GLZ}
corresponding limits of the AW algebra.
The duality property of the AW polynomials
then may also have a limit, but
usually as a duality between two different families.
In general, two different families of special functions
$\phi_\la(x)$ and $\psi_\mu(y)$, both occurring as eigenfunctions
of a certain operator, are dual when
$\phi_\la(x)=\psi_{\si(x)}(\tau(\la))$ for certain functions $\si$ and
$\tau$ (possibly only for a restricted set of values of $\la$ and $x$).
When the equality holds for all spectral values of the two operators
then this property is called {\it bispectrality}.
It was first emphasized in the context of differential
(rather than $q$-difference) operators
by Duistermaat and Gr\"unbaum \cite{DuisG}. In that seminal paper,
motivated by the need to analyze the relation between amount of data
and image quality in limited angle tomography, the authors classified
all possible potentials in the Schr\"odinger equation such that the
wave functions would be an eigenfunctionof a difference operator in the
spectral parameter as well. Since then, the bispectrality has been key for
the  determination of special solutions of the KdV equation
and of the KP hierarchy \cite{ZM}, \cite{Zu}, \cite{Wi} and 
of many other integrable equations including integrable systems of
particles \cite{Ka}, \cite{GH}, \cite{Wi1}.

This line of research produced further links with Huygens' principle of wave
propagation~\cite{Be},  representation of infinite dimensional Lie algebras,
and isomonodromic deformations of differential equations \cite{Ha},
\cite{Du}.
In the latter context, the second author of the current paper
discovered a link between the theory of the Painlev\'e differential
equations and some families in the $q$-Askey scheme \cite{M}.
Let us briefly explain what this link consists of. The Painlev\'e
differential equations are eight non-linear ODE's whose solutions are
encoded by points in the so-called monodromy manifolds (a different
manifold for each Painlev\'e equation). Each of these monodromy
manifolds carries a natural Poisson structure which quantizes to a
special degeneration of
the AW algebra that regulates
a specific family in the $q$-Askey scheme. Interestingly, dual families
(for example
the continuous dual $q$-Hahn and the big $q$-Jacobi polynomials)
correspond to the same monodromy manifold in the classical limit,
thus suggesting an alternative approach to spot dualities.
Moreover, the limit transitions in the $q$-Askey scheme
correspond to the so called confluence procedure of the
Painlev\'e equations, that can be viewed geometrically
as a procedure to merge holes on a Riemann sphere
by which cusped holes are created \cite{CMR}.
In this picture, dual families in the $q$-Askey scheme
scorrespond to Riemann spheres with the same structure.

This paper studies, for a relatively small but important part of
the $q$-Askey scheme (see Figure \ref{fig:1}),
the duality and its limit behaviour, first
for the symmetric polynomials and the corresponding (degenerate)
AW algebras, and next, starting in  Section \ref{152},
for the non-symmetric polynomials and
the corresponding (degenerate) DAHA's.
These degenerate DAHA's were introduced by the second
author \cite{M}, \cite{M3}. The  ones on the lowest level of Figure \ref{fig:1}
can be recognized as nil-DAHA's \cite[Remark 8.4]{LT}.

\begin{figure}[t]
\setlength{\unitlength}{3mm}
\begin{picture}(14,13)(-4,1)
\linethickness{0.2mm}
\put(16.5,12) {\framebox(9,2.5) {Askey-Wilson}}
\put(18,12) {\vector(-1,-1){2.4}}
\put(24,12) {\vector(1,-1){2.4}}
\put(4,7) {\framebox(15,2.5) {Continuous Dual $q$-Hahn}}
\linethickness{0.4mm}
\multiput(19,8,25)(0.5,0){10}{\line(1,0){0.25}}
\linethickness{0.2mm}
\put(24,7) {\framebox(8,2.5) {Big $q$-Jacobi}}
\put(11.5,7) {\vector(0,-1){2.4}}
\put(28,7) {\vector(0,-1){2.4}}
\put(6,2) {\framebox(11,2.5) {Al-Salam-Chihara}}
\linethickness{0.4mm}
\multiput(17,3.25)(0.5,0){12}{\line(1,0){0.25}}
\linethickness{0.2mm}
\put(23,2) {\framebox(10,2.5) {Little $q$-Jacobi}}
\end{picture}
\caption{Part of $q$-Askey scheme treated in this paper.
Dashed lines show duality.}
\label{fig:1}
\end{figure}
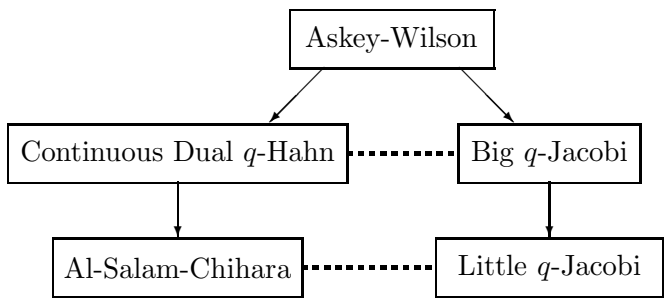

The non-symmetric versions of the continuous dual $q$-Hahn polynomials
and the Al-Salam-Chihara polynomials were earlier studied by
the second author \cite{M3}. With regard to the non-symmetric versions
of the big and little $q$-Jacobi polynomials there is the problem that
one has to pass from Laurent polynomials to ordinary polynomials.
In a paper \cite{KB} by the first author and Bouzeffour this was
circumvented for the limit from Askey-Wilson directly to little $q$-Jacobi
by rewriting the non-symmetric AW polynomials as 2-vector-valued
ordinary polynomials and then taking the limit. As shown in
\S\ref{153}, this works also for the limit from AW to big $q$-Jacobi.
As for non-symmetric little $q$-Jacobi, there turn out to be two
versions, depending on how the limit from big to
little $q$-Jacobi is taken. One of these versions
is dual to Al-Salam-Chihara,
but the other is dual to the Askey-Wilson $q$-Bessel functions
\cite[(2.12)]{KS}, which are no longer polynomials but transcendental
functions. This should not be seen as a serious obstacle.
There are many other examples of non-polynomial limit cases
of polynomials in the ($q$-)Askey scheme, the best known probably
being Bessel functions as limit cases of Jacobi polynomials.
\par
This paper is organized as follows.
Sections 2-4 deal with symmetric polynomials, their duals and the
corresponding (degenerate) Zhedanov algebras.
This is done for the AW polynomials in
Section 2, for contnuous dual $q$-Hahn and big $q$-Jacobi in Section 3,
and for Al-Salam-Chihara and little $q$-Jacobi in Section 4.
Next Sections 5--7 treat non-symmetric polynomials, their duals, and
the corresponding (degenerate) DAHA'a. The non-symmetric
AW case is in Section 5, its 2D vector-valued realization in Section 6,
and the degenerate cases in Section 7. Finally Section 8 gives a summary
of other related work and offers perspectives for further research.
\paragraph{\bf Notation}
In this paper we denote the variable of a Laurent polynomial by $z$ and the
one of a standard polynomial by $x$. To emphasize the type of polynomials
under consideration, we also use 
square brackets for Laurent polynomials and round brackets for
ordinary polynomials. Correspondingly, when dealing with DAHA and its
degenerations, we will denote the generators in ``standard
presentation" by $T_0,T_1,Z^{\pm 1}$ when $Z$ is invertible,
$T_0,T_1,X, X'$ when $X$ is not invertible.

For $q$-hypergeometric series we use notation as in
\cite[Section 1.2]{GR}, but we will usually relax the conditions on $q$.
\section{Duality for the Askey-Wilson polynomials and the Zhedanov algebra}
\subsection{Definition of Askey-Wilson polynomials and eigenvalue
equations}
In this paper we will use the following standardization and
notation for \emph{Askey-Wilson polynomials} (in short
\emph{AW polynomials})
\label{133}
\begin{equation}
R_n[z]=R_n[z;a,b,c,d\,|\,q]:=
\qhyp43{q^{-n},q^{n-1}abcd,az,az^{-1}}{ab,ac,ad}{q,q},
\label{62}
\end{equation}
and we will work in the following assumptions:
\begin{equation}
\begin{split}
&q\ne0,\qquad\qquad\;\;\, q^m\ne1\qquad\quad(m=1,2,\ldots);\\
&a,b,c,d\ne0,\qquad
abcd\ne q^{-m}\quad(m=0,1,2,\ldots).
\end{split}
\label{eq:6}
\end{equation}
The polynomials \eqref{62} are related to the AW polynomials
$p_n(x;a,b,c,d\,|\,q)$ in usual notation \cite[(14.1.1)]{KLS} by
\begin{equation}
p_n\big(\thalf(z+z^{-1});a,b,c,d\,|\,q\big)=a^{-n}(ab,ac,ad;q)_n\,
R_n[z;a,b,c,d\,|\,q].
\label{130}
\end{equation}
While $p_n(x;a,b,c,d\,|\,q)$ is symmetric in its four parameters
$a,b,c,d$, $R_n[z;a,b,c,d\,|\,q]$ is only symmetric in $b,c,d$.
But the larger symmetry involving $a$ is lost anyhow with the duality
to be discussed later, see \eqref{65}.
\par
The polynomials $R_n[z]$ are eigenfunctions of the operator
$L_z$ acting on the space
of symmetric Laurent polynomials $f[z]=f[z^{-1}]$:
\begin{align}
(Lf)[z]&=
L_z\big(f[z]\big):=\big(1+q^{-1}abcd\big)f[z]+\frac{(1-az)(1-bz)(1-cz)(1-dz)}{(1-z^2)(1-qz^2)}\,
\bigl(f[qz]-f[z]\bigr)\nonu\\
&\qquad+\frac{(a-z)(b-z)(c-z)(d-z)}{(1-z^2)(q-z^2)}\,
\bigl(f[q^{-1}z]-f[z]\bigr).
\label{eq:8}
\end{align}
The eigenvalue equation is
\begin{equation}
L_z\big(R_n[z]\big)=\la_n\,R_n[z],\qquad
\la_n:=q^{-n}+abcd q^{n-1}.
\label{eq:22}
\end{equation}
Under condition \eqref{eq:6} all eigenvalues in \eqref{eq:22} are distinct.
\par
The three-term recurrence relation \cite[(14.1.4)]{KLS} for
the AW polynomials can be interpreted as
an eigenvalue equation if we consider $R_n[z]$ for fixed $z$
in its dependence on~$n$. Then $R_n[z]$ is an eigenfunction of the
operator $M_n$, acting on functions $g(n)$ of $n$ ($n=0,1,2,\ldots$),
defined by
\begin{align}
M_n\big(g(n)\big)&:=A_n g(n+1)+(a+a^{-1}-A_n-C_n)g(n)+C_n g(n-1),
\label{12}\\
A_n&:=\frac{(1-abq^n)(1-acq^n)(1-adq^n)(1-abcdq^{n-1})}
{a(1-abcdq^{2n-1})(1-abcdq^{2n})}\,,\nonu\\
C_n&:=\frac{a(1-q^n)(1-bcq^{n-1})(1-bdq^{n-1})(1-cdq^{n-1})}
{(1-abcdq^{2n-2})(1-abcdq^{2n-1})}\,,\quad C_0=0.\nonu
\end{align}
The eigenvalue equation is
\begin{equation}
M_n\big(R_n[z]\big)=(z+z^{-1})\,R_n[z].
\label{33}
\end{equation}

Under stricter conditions than \eqref{eq:6}, namely $0<q<1$ and
$|a|,|b|,|c|,|d|\le1$ such that pairwise products of $a, b, c, d$ are not equal
to 1 and such that non-real parameters occur in complex conjugate pairs,
the AW polynomials are orthogonal with respect to a non-negative weight
function on $x=\thalf(z+z^{-1})\in[-1,1]$. For convenience we give
this orthogonality in the variable $z$ on the unit circle, where the integrand
is invariant under $z\to z^{-1}$:
\begin{equation}
\frac{(q,ab,ac,ad,bc,bd,cd;q)_\iy}{4\pi (abcd;q)_\iy}
\int_{|z|=1}R_m[z]\,R_n[z]
\left|\frac{(z^2;q)_\iy}{(az,bz,cz,dz;q)_\iy}\right|^2 \frac{dz}{iz}
=h_n\,\de_{m,n},
\label{131}
\end{equation}
where $h_0=1$ and
where the explicit expression for $h_n$ (omitted here)
can be obtained from
\cite[(14.1.2)]{KLS} together with \eqref{130}.
\subsection{Zhedanov algebra}
{The {\em Zhedanov algebra} or {\em AW algebra}}
$\AW(3)$ (see \cite{Zhe}) is the algebra with two generators
$K_0$, $K_1$ and with two relations
\begin{equation}
\begin{split}
(q+q^{-1})K_1K_0K_1-K_1^2K_0-K_0K_1^2&=B\,K_1+ C_0\,K_0+D_0,\\
(q+q^{-1})K_0K_1K_0-K_0^2K_1-K_1K_0^2&=B\,K_0+C_1\,K_1+D_1.
\end{split}
\label{1}
\end{equation}
Here the structure constants
$B$, $C_0$, $C_1$, $D_0$, $D_1$ are fixed complex constants.
\begin{remark}
The relations for $\AW(3)$ were originally given in \cite{Zhe} in
terms of three generators
{(which explains the notation $\AW(3)$)}: $K_0$, $K_1$, and in addition $K_2$
which is given in terms of $K_1$ and $K_2$ by
the \emph{$q$-commutator}
\begin{equation*}
K_2:=[K_0,K_1]_q:=q^\half K_0 K_1-q^{-\half} K_1 K_0.
\end{equation*}
This presentation is in particular
suitable for computations
in computer algebra, since the three relations can be written
in PBW form. In this paper we prefer the two generators version
because it makes the duality we plan to discuss more transparent.
\end{remark}
There is a {\em Casimir element} $Q$ commuting with $K_0$, $K_1$:
\begin{multline}
Q:=K_1K_0K_1K_0-(q^2+1+q^{-2})K_0K_1K_0K_1+
(q+q^{-1})K_0^2K_1^2\\
+(q+q^{-1})(C_0K_0^2+C_1K_1^2)
+B\bigl((q+1+q^{-1})K_0K_1+K_1K_0\bigr)\\
+(q+1+q^{-1})(D_0K_0+D_1K_1).
\label{3}
\end{multline}
\begin{remark}
\label{2}
As observed in \cite[Remark 2.3]{K1}, for the five structure constants
$B$, $C_0$, $C_1$, $D_0$, $D_1$ in the relations \eqref{1}
two degrees of freedom are caused
by scale transformations $K_0\to c_0K_0$ and $K_1\to c_1K_1$
of the generators. These induce the following transformations
on the structure constants:
\begin{equation*}
B\to c_0c_1B,\quad
C_0\to c_1^2C_0,\quad
C_1\to c_0^2C_1,\quad
D_0\to c_0c_1^2D_0,\quad
D_1\to c_0^2c_1D_1.
\end{equation*}
These also result into a transformation $Q\to c_0^2 c_1^2 Q$
of the Casimir element \eqref{3}. So there are essentially only
three degrees of freedom for the structure constants
(and one more freedom to fix the value of $Q$ in the basic
representation, seeRemark \ref{94}).
A nice way of presenting this symmetrically was emphasized by
Terwilliger \cite[(1.1)]{Terw}. In slightly different notation this
is done as follows. Put
\begin{equation}
\begin{split}
A_0&:=(q-q^{-1}) C_1^{-\half} K_0,\\
A_1&:=(q-q^{-1}) C_0^{-\half} K_1,\\
A_2&:=(q-q^{-1}) (C_0C_1)^{-\half}
\big(-[K_0,K_1]_q+(q^\half-q^{-\half})^{-1}B\big).
\end{split}
\label{98}
\end{equation}
Then we can equivalently describe $\AW(3)$ as the algebra generated by
$A_0,A_1,A_2$ with relations
\begin{equation}
\begin{split}
(q-q^{-1})^{-1}[A_1,A_2]_q+A_0&=\al_0,\\
(q-q^{-1})^{-1}[A_2,A_0]_q+A_1&=\al_1,\\
(q-q^{-1})^{-1}[A_0,A_1]_q+A_2&=\al_2,
\end{split}
\label{92}
\end{equation}
where $\al_0,\al_1,\al_2$ are structure constants which can be expressed
in terms of $B,C_0,C_1,D_0,D_1$ by
\begin{equation}
\al_0=-\,\frac{(q-q^{-1})D_0}{C_0C_1^\half},\quad
\al_1=-\,\frac{(q-q^{-1})D_1}{C_0^\half C_1},\quad
\al_2=\frac{(q^\half+q^{-\half})B}{(C_0C_1)^\half}.
\label{96}
\end{equation}
Terwilliger \cite{Terw} considers $\al_0,\al_1,\al_2$
as central elements. He calls the resulting algebra the
\emph{universal Askey-Wilson algebra}.
He also identifies a Casimir element $\om$,
{which is closely related to $Q$ in \eqref{3}}:
\begin{multline}
\om:=q^\half A_0A_1A_2+q A_0^2+q^{-1} A_1^2+q A_2^2
-(1+q)\al_0 A_0\\
-(1+q^{-1})\al_1 A_1-(1+q)\al_2 A_2
=(q-q^{-1})^2(C_0C_1)^{-1} Q-\al_2.
\label{95}
\end{multline}
Then he proves in \cite[Corollary 8.3]{Terw} that the four central
elements $\al_0,\al_1,\al_2,\om$ generate the center of the
universal Askey-Wilson algebra.
Therefore, in our presentation, $Q$ generates the center of $\AW(3)$.
\par
To go back from relations \eqref{92} to \eqref{1} we need two
arbitrary rescaling constants $c_0,c_1\ne0$ and then put:
\begin{align*}
&K_0=c_0^{-1}A_0,\quad K_1=c_1^{-1}A_1,\quad
C_0=(q-q^{-1})^2 c_1^{-1},\quad C_1=(q-q^{-1})^2 c_0^{-1},\\
&B=\frac{(q-q^{-1})^2\al_2}{(q^\half+q^{-\half})c_0c_1}\,,\quad
D_0=-\,\frac{(q-q^{-1})^2\al_0}{c_0c_1^2},\quad
D_1=-\,\frac{(q-q^{-1})^2\al_1}{c_0^2c_1}\,.
\end{align*}
Then $Q=(q-q^{-1})^{-2} (c_0c_1)^2(\om+\al_2)$.
\end{remark}
\begin{remark}
\label{91}
In connection with the relations \eqref{1} defining $\AW(3)$
let $\lan K_1,K_2\ran$ denote the free algebra
generated by $K_1$ and $K_2$.
Note that the algebra isomorphism
$\tau\colon \lan K_1,K_2\ran\to\lan K_1,K_2\ran^{\rm op}$
which reverses the order of the factors in the terms of the elements of
$\lan K_1,K_2\ran$,  leaves invariant the ideal
generated by the relations
\eqref{1} (each of the two relations separately
is even left invariant). So $\tau$ induces an algebra
isomorphism $\tau\colon \AW(3)\to \AW(3)^{\rm op}$.
It can be shown that the Casimir element $Q$, given by \eqref{3},
is invariant under $\tau$. However, in the set-up with generators
$A_0,A_1,A_2$ and relations \eqref{92} there is no invariance
of the relations after
reversion of the order of the factors.
\end{remark}
Let $e_1,e_2,e_3,e_4$ be the elementary symmetric polynomials in
$a,b,c,d$:
\begin{equation}
\begin{split}
&e_1:=a+b+c+d,\qquad
e_2:=ab+ac+bc+ad+bd+cd,\\
&\qquad\quad
e_3:=abc+abd+acd+bcd,\qquad
e_4:=abcd.
\end{split}
\label{4}
\end{equation}
Then express the structure constants in \eqref{1}
in terms of $a,b,c,d$ by means of \eqref{4}:
\begin{equation}
\begin{split}
& B :=(1-q^{-1})^2(e_3+qe_1),\\
&C_0 :=(q-q^{-1})^2,\quad
C_1 :=q^{-1}(q-q^{-1})^2 e_4,\\
&D_0 :=-q^{-3}(1-q)^2(1+q)(e_4+qe_2+q^2),\\
&D_1 :=-q^{-3}(1-q)^2(1+q)(e_1e_4+qe_3).
\end{split}
\label{16}
\end{equation}
Note that, for given $C_0=(q-q^{-1})^2$ and
for given values of $B,C_1,D_0,D_1$ we can solve (2.11) as a system
of equations in $e_1,e_2,e_3,e_4$. This system is uniquely solvable.
Next, $e_1,e_2,e_3,e_4$ determine $a,b,c,d$ up to permutations.
\par
There is a representation (the {\em basic representation} or
{\em polynomial representation}) of the algebra
$\AW(3)$ with structure constants \eqref{16}
on the space of symmetric Laurent polynomials
as follows:
\begin{equation}
(K_0f)[z]:=L_z\big(f[z]\big),\qquad
(K_1f)[z]:=(Z+Z^{-1})(f)[z]=(z+z^{-1})f[z],
\label{700}
\end{equation}
where $L_z$
is the operator \eqref{eq:8} having the
AW polynomials
as eigenfunctions and $Z^{\pm 1}$ is the operator of multiplication by
$z^{\pm 1}$.
The Casimir element $Q$ becomes constant in this representation:
\begin{equation}
(Qf)(z)=Q_0\,f(z),
\label{9}
\end{equation}
where
\begin{multline}
Q_0:=q^{-4}(1-q)^2\Bigl(q^4(e_4-e_2)+q^3(e_1^2-e_1e_3-2e_2)\\
-q^2(e_2e_4+2e_4+e_2)
+q(e_3^2-2e_2e_4-e_1e_3)+e_4(1-e_2)\Bigr).
\label{100}
\end{multline}
\begin{remark}
\label{94}
The basic representation \eqref{700} gives rise to a one-parameter
family of representations of $\AW(3)$ by using a scale transformation
$K_0\to \la K_0$, $K_1\to K_1$ in \eqref{1}.
Compare with the beginning of
Remark \ref{2}: we now take $c_0=\la$, $c_1=1$. Now we have
to solve $e_1,e_2,e_3,e_4$
from the system of equations
 \begin{equation*}
\begin{split}
&\quad \la B=(1-q^{-1})^2(e_3+qe_1),\qquad
\la^2 C_1=q^{-1}(q-q^{-1})^2 e_4,\\
&\la D_0 =-\frac{(1-q)^2(1+q)}{q^3}(e_4+qe_2+q^2),\\ 
&\la^2 D_1 =-\frac{(1-q)^2(1+q)}{q^3}(e_1e_4+qe_3),
\end{split}
\end{equation*}
and we get $a,b,c,d$ (depending on $\la$) from $e_1,e_2,e_3,e_4$.
Then, for each value of $\la$ we have a representation
\begin{equation*}
(K_0f)[z]:=\la^{-1} L_z\big(f[z]\big),\qquad
(K_1f)[z]:=(Z+Z^{-1})(f)[z].
\end{equation*}
Here $L_z$ depends on $a,b,c,d$, and hence on $\la$.
Then $Q$ takes the value $\la^{-2}Q_0$ with $Q_0$ given by \eqref{100},
where $e_1,e_2,e_3,e_4$ depend on $\la$.

If, conversely, we do not pick $\la$ but fix $Q_0$ then a very
complicated system of five equations in $e_1,e_2,e_3,e_4,\la$ has
to be solved.

For the relations \eqref{92} a one-parameter family
of representations can be obtained by first passing to the relations
\eqref{1} as we specified in Remark \ref{2}, obtaining the
representations there, and rewriting everything again in terms of
the relations \eqref{92}.
\end{remark}
\begin{definition}
{The \emph{centerfree Zhedanov} or \emph{Askey-Wilson algebra}
$\AW(3,Q_0)$} is the algebra generated
by $K_0,K_1$ with three relations, namely the two relations
\eqref{1}, where the structure constants are expressed in terms
of $a,b,c,d,q$ by \eqref{16} and \eqref{4}, and the relation
\begin{equation}
Q=Q_0,
\label{93}
\end{equation}
where $Q$ and $Q_0$ are given by \eqref{3} and \eqref{100}.
\end{definition}
In order to emphasize the dependence on the structure constants and
the choice of generators, we will also use notation
$\AW(3,Q_0)=\AW_{a,b,c,d;q}(3,Q_0)
=\AW_{a,b,c,d;q}(3,Q_0;K_0,K_1)$.
By Remark \ref{94}, for $q$ fixed, $\AW_{a,b,c,d;q}(3,Q_0)$ is in
bijective correspondence with $a,b,c,d$ up to permutations.
By what we observed at the end of Remark \ref{2} the algebra
$\AW(3,Q_0)$ has center $\{0\}$.
It was proved in \cite[Theorem~2.2]{K} that \eqref{700} generates a
faithful representation of $\AW_{a,b,c,d;q}(3,Q_0)$.
By Remark \ref{91} the map
$\tau\colon \lan K_1,K_2\ran\to\lan K_1,K_2\ran^{\rm op}$
induces an algebra isomorphism
$\tau\colon \AW(3,Q_0)\to \AW(3,Q_0)^{\rm op}$.
\par
A representation of $\AW_{a,b,c,d;q}(3,Q_0)$ which is essentially
equivalent to the
representation generated by
\eqref{700} can be realized on the space of functions
$g(n)$ ($n=0,1,2,\ldots$) as follows:
\begin{equation}
(K_0g)(n):=\La_n(g(n)):=\la_n\,g(n),\qquad
(K_1g)(n):=M_n(g(n)).
\label{19}
\end{equation}
This follows because the AW polynomials are the
{\it overlap coefficients}\/ connecting the two representations:
\begin{equation*}
L_z(R_n[z])=\La_n(R_n[z]),\qquad
(Z+Z^{-1})(R_n[z])=M_n(R_n[z])
\end{equation*}
in the following sense: 
the AW polynomials form a complete system of orthogonal polynomials
with respect to a suitable orthogonality measure $\mu$.
Then the Fourier-Askey-Wilson transform $f\to \wh f$,
\begin{equation}
\wh f(n):=\int f[z]\,R_n[z]\,d\mu(z)
\label{39}
\end{equation}
intertwines between the two representations. In fact:
\begin{equation}
\begin{split}
\Lambda_n\big(\,\wh f(n)\big)=\lambda_n\int f[z]\,R_n[z]\,d\mu(z)=
\int f[z]\,(LR_n)[z]\,d\mu(z)\\
=\int (Lf)[z]\,R_n[z]\,d\mu(z)=\wh{Lf}(n),
\end{split}
\label{41}
\end{equation}
where in the last step we have used the fact that  $L$
is a self-adjoint operator on the real Hilbert space $L^2(d\mu)$.
Similarly,
\begin{equation*}
M_n\big(\,\wh f(n)\big)=\int f[z]\,M_n(R_n[z])\,d\mu(z)=\int
\left(z+z^{-1}\right) f[z]\,R_n[z]\,d\mu(z)
=\big((Z+Z^{-1})(f)\big)\hat{\vphantom{1}}\,(n).
\end{equation*}
More generally, if $p(K_0,K_1)\in\lan K_0,K_1\ran$ then
\begin{equation*}
p(\La_n,M_n)\big(\,\wh f(n)\big)=
\big(p(L,Z+Z^{-1})(f)\big)\hat{\vphantom{1}}\,(n).
\end{equation*}
Hence, $\La,M$ satisfy the same relations \eqref{1}, \eqref{93}
as $L,Z+Z^{-1}$. Thus \eqref{19} generates a representation of
$\AW_{a,b,c,d;q}(3,Q_0)$.
This was already observed in \cite{Zhe} and \cite{GLZ},
more concretely for the $q$-Racah case, where the representations
are finite-dimensional.
\par
By faithfulness we have
\begin{equation*}
\AW_{a,b,c,d;q}(3,Q_0;K_0,K_1)\simeq
\AW_{a,b,c,d;q}(3,Q_0;L,Z+Z^{-1})\simeq
\AW_{a,b,c,d;q}(3,Q_0;\La,M).
\end{equation*}
\subsection{Duality for AW polynomials}
Define {\em dual parameters}
$\td a,\td b,\td c,\td d$ in terms of $a,b,c,d$ by
\begin{equation}
\td a=(q^{-1}abcd)^\half,\quad \td b=ab/\td a,\quad \td c=ac/\td a,\quad
\td d=ad/\td a.
\label{64}
\end{equation}
Jumping from one branch to the other branch in the square root in
the formula for $\td a$ implies that $\td a,\td b,\td c,\td d$ move to
$-\td a,-\td b,-\td c,-\td d$. This corresponds to the following trivial 
symmetry that follows immediately from \eqref{62}:
\begin{equation}\label{63}
R_n[z;a,b,c,d\,|\, q]=R_n[-z;-a,-b,-c,-d\,|\, q].
\end{equation}
Repetition of the parameter transformation recovers
the original parameters up to a possible common multiplication of
$a,b,c,d$ by $-1$, while the branch choice for $\td a$ is irrelevant:
\begin{equation}
a=\big(q^{-1}\td a\td b\td c\td d\,\big)^\half,\quad b=\td a\td b/a,\quad
c=\td a\td c/a,\quad d=\td a\td d/a.
\label{65}
\end{equation}
From \eqref{62} we have the duality relation
\begin{equation}
R_n\big[a^{-1}q^{-m};a,b,c,d\,|\,q\big]=
R_m\big[\td a^{-1} q^{-n};\td a,\td b,\td c,\td d\,|\,q\big]\qquad
(m,n\in\ZZ_{\ge0}).
\label{66}
\end{equation}
By \eqref{63} the two sides of \eqref{66} are invariant under
common multiplication by $-1$ of $a,b,c,d$, respectively
$\td a,\td b,\td c,\td d$.
\par
There is a duality corresponding to \eqref{66} for the operators
$L_z$ and $M_n$ defined by \eqref{eq:8} and \eqref{12}, respectively:
\begin{equation}
L_z\big(f[z]\big)\Big|_{z=a^{-1}q^{-m}}=
\td a\,\td M_m\big(f\big[a^{-1}q^{-m}\big]\big),
\label{18}
\end{equation}
where $\td M_m$ is the difference operator $M_m$ with
respect to dual parameters.
For $f[z]:=R_n[z]$ both sides of \eqref{18}
yield $(q^{-n}+abcdq^{n-1}) R_n[a^{-1}q^{-m}]$.
Similarly to \eqref{18} there is a duality between the multiplication
operators $Z+Z^{-1}$ given by \eqref{700} and $\La_n$
given by \eqref{19}:
\begin{equation}
(Z+Z^{-1})(f[z])\big|_{z=a^{-1}q^{-m}}=
a^{-1}\,\td\La_m\big(f(a^{-1}q^{-m})\big),
\label{23}
\end{equation}
where $\td\La_m$ is the multiplication operator $\La_m$ with
respect to dual parameters.
\par
Formulas \eqref{19} and \eqref{23} are instances of operators
$\check A$
acting on functions on $\ZZ_{\ge0}$ which are induced by restriction
of operators $A$ acting on functions $f[z]=f[z^{-1}]$ depending
on $z\in\mathbb{C}\backslash\{0\}$. Suppose that such an operator
$A$ has the property that $(Af)[a^{-1}q^{-m}]=0$ for all
$m\in\ZZ_{\ge0}$
if $f[a^{-1}q^{-m}]=0$ for all $m\in\ZZ_{\ge0}$. Then put
\[
(\check A g)(m):=(Af)[a^{-1}q^{-m}]\quad{\rm if}\quad
g(m)=f[a^{-1}q^{-m}].
\]
Clearly $(AB)\check{\vphantom{1}}=\check A \check B$.
By \eqref{19} and \eqref{23}, if $A=L$ then $\check A=\td a\td M$, and
if $A=Z+Z^{-1}$ then $\check A=a^{-1}\td\La$.
\par
Corresponding to the trivial symmetry
\begin{equation}
R_n[z;a,b,c,d\,|\, q]=R_n\big[z;a^{-1},b^{-1},c^{-1},d^{-1}\,
\big|\,q^{-1}\big],
\label{11}
\end{equation}
we see that, by  \eqref{eq:8}, $L_z$
becomes $\frac q{abcd}\,L_z$ if
$a,b,c,d,q\to a^{-1},b^{-1},c^{-1},d^{-1},q^{-1}$.
\begin{remark}
\label{rmk:14}
By \cite[\S 5.7,\S 8.5]{NS} our dual parameters \eqref{64} match with
the dual parameters in~\cite{NS}: just interchange $k_0$ and $u_1$
in \cite[\S5.7]{NS}. 
\end{remark}
\subsection{{Duality for $\AW(3,Q_0)$}}\label{suse:dZ}
There are several symmetries of $\AW_{a,b,c,d;q}(3,Q_0;K_0,K_1)$.
We already observed that it is invariant under permutations of
$a,b,c,d$.
\par
There is an isomorphism
\cite[\S2]{Zhe}, \cite[(2.11)]{K1}
\footnote{In \cite[(2.11)]{K1} the fourth argument of $\AW$ on the
right-hand side should be $(q^{-1}abcd)^\half$.}
(both an algebra and an anti-algebra isomorphism):
\begin{equation}
{\AW_{a,b,c,d;q}\Big(3,Q_0;K_0,K_1\Big)\simeq
\AW_{\td a,\td b,\td c,\td d;q}\Big(3,\td Q_0;a K_1,\td a^{-1}K_0\Big)},
\label{70}
\end{equation}
where $\td a, \td b,\td c,\td d$ are given in \eqref{64},
{and $\td Q_0$ denotes $Q_0$ in terms of the dual parameters}.
Indeed, if  $\td B,\td C_0,\ldots$ denote $B,C_0,\ldots$
in terms of the dual parameters then
\begin{equation}
\begin{split}
&\td B=a\td a^{-2}\,B,\quad
\td C_0=\td a^{-2}\,C_1,\quad
\td C_1=a^2\,C_0,\\
&\td D_0=a\td a^{-2}\,D_1,\quad
\td D_1=a^2\td a^{-1}\,D_0,\quad
\td Q_0=a^2\td a^{-2}\,Q_0.
\end{split}
\label{97}
\end{equation}
{Hence relations \eqref{1} with dual parameters and with
$K_0,K_1$ replaced by $a K_1,\td a^{-1}K_0$ are equivalent to the
original relations \eqref{1}.}
Furthermore replacement of $K_0,K_1,a,b,c,d$ in the right-hand side
of \eqref{3} by $a K_1,\td a^{-1}K_0,\td a,\td b,\td c,\td d$,
respectively, yields the old expression multiplied by $a^2\td a^{-2}$,
{and, by \eqref{97}, the same is true if we replace $a,b,c,d$ by
$\td a,\td b,\td c,\td d$ in the right-hand side of \eqref{100}.}
So the algebras on the left and right of \eqref{70} satisfy
equivalent relations.
\begin{remark}
\label{99}
Let us consider the effect of the duality \eqref{70} on the
representation \eqref{700}. This being a representation means
that the relations \eqref{1}, {\eqref{3}} hold for $K_0=L$,
$K_1=Z+Z^{-1}$. By \eqref{70} these relations with $a,b,c,d$
replaced by $\td a,\td b,\td c,\td d$ hold for $K_0=a(Z+Z^{-1})$,
$K_1=\td a^{-1}L$. Then, by \eqref{18} and \eqref{23}, the same
relations also hold for $K_0=\td\La$, $K_1=\td M$. Thus we have arrived
via the duality isomorphism \eqref{70} at the representation
\eqref{19} with $a,b,c,d$ replaced by $\td a,\td b,\td c,\td d$.
\end{remark}
\begin{remark}
The duality \eqref{70} takes a particularly simple and elegant form
for the algebra generated by $A_0,A_1,A_2$ with relations \eqref{92}
together with $\om=\om_0$, where $\om$ is
given by \eqref{95} and $\om_0$ is a constant.
By \eqref{98}, \eqref{96} and \eqref{97} we see that the duality
for that algebra amounts to an anti-isomorphism which interchanges
$A_0$ and $A_1$ and keeps $A_2$ fixed, while $\al_0$ and $\al_1$ are
interchanged and $\al_2$ and $\om_0$ are kept fixed.
It follows from \cite[Lemma 6.1]{Terw} that $\om$ then does not change.
Since the parameters $a,b,c,d$ are no longer involved in
this formulation of the duality, the symmetry breaking in
\eqref{70} seems to be absent now. The price to be paid for this
is that there is less immediate contact with the AW
polynomials.
It is also clear that the duality in this setting is part of an
$S_3$ symmetry acting simultaneously on $A_0,A_1,A_2$ and
$\al_0,\al_1,\al_2$.
{The reparametrization of the Askey-Wilson parameters in
Huang \cite[\S3.2]{Huang} seems to behave nicely under this action
of $S_3$.
It is not clear what would be the effect
on the basic representation by the action of
the full $S_3$ symmetry group.}
\end{remark}
There is also an algebra isomorphism
\begin{equation}
\AW_{a,b,c,d;q}\Big(3,Q_0;K_0,K_1\Big)\simeq
\AW_{a^{-1},b^{-1},c^{-1},d^{-1};q^{-1}}\left(3,Q_0;
\frac q{abcd}\,K_0,K_1\right).
\label{156}
\end{equation}
\section{Duality for continuous dual \boldmath$q$-Hahn and big
\boldmath$q$-Jacobi polynomials}\label{se:dcqH-BqJ}
\subsection{Limits to Continuous Dual $q$-Hahn and Big $q$-Jacobi}
\paragraph{\bf Limit from AW to
Continuous Dual \boldmath$q$-Hahn}
The continuous dual $q$-Hahn polynomials are the limit case $d\to0$
of the AW polynomials \eqref{62}:
\begin{equation}
R_n[z;a,b,c\,|\,q]:=
\qhyp32{q^{-n},az,az^{-1}}{ab,ac}{q,q}
=\lim_{d\to0}R_n[z;a,b,c,d\,|\,q].
\label{67}
\end{equation}
The polynomials \eqref{67} are related to the continuous dual
$q$-Hahn  polynomials
$p_n(x;a,b,c\,|\,q)$ in usual notation \cite[(14.3.1)]{KLS} by
\begin{equation*}
p_n\big(\thalf(z+z^{-1});a,b,c\,|\,q\big)=a^{-n}(ab,ac;q)_n\,
R_n[z;a,b,c\,|\,q].
\end{equation*}
\par
The corresponding limits of \eqref{eq:8}, \eqref{eq:22},
\eqref{12}, \eqref{33}
for the operators $L$, $M$ and its eigenvalue equations are:
\begin{align}
&(Lf)[z]=L_z\big(f[z]\big)
=\frac{(1-az)(1-bz)(1-cz)}{(1-z^2)(1-qz^2)}\,\bigl(f[qz]-f[z]\bigr)\nonu\\
&\qquad\qquad\qquad\qquad\qquad
-\frac{z(a-z)(b-z)(c-z)}{(1-z^2)(q-z^2)}\,
\bigl(f[q^{-1}z]-f[z]\bigr)+f[z],\label{35}\\
\noalign{\allowbreak}
&L_z\big(R_n[z]\big)=q^{-n}\,R_n[z],\label{36}\\
\noalign{\allowbreak}
&M_n\big(g(n)\big)=a^{-1}(1-abq^n)(1-acq^n)(g(n+1)-g(n))\nonu\\
&\qquad\qquad\qquad
+a(1-q^n)(1-bcq^{n-1})(g(n-1)-g(n))+(a+a^{-1})g(n),\label{37}\\
\noalign{\allowbreak}
&M_n\big(R_n[z]\big)=(z+z^{-1})\,R_n[z].
\label{38}
\end{align}
The obtained $q$-difference equation and recurrence relation agree
with \cite[(14.3.7), (14.3.4)]{KLS}.
\paragraph{\bf Limit from AW to Big \boldmath$q$-Jacobi}
The big $q$-Jacobi polynomials \cite[(14.5.1)]{KLS}
are obtained as a more tricky limit case  \cite[(14.1.18)]{KLS}
of AW polynomials \eqref{62}:
\begin{equation}
\begin{split}
P_n(x;a,b,c;q):=
\qhyp32{q^{-n},q^{n+1}ab,x}{aq,cq}{q,q}
=\lim_{\la\to0}
R_n[\la^{-1}x;\la,qa\la^{-1},qc\la^{-1},bc^{-1}\la\,|\,q].
\label{68}
\end{split}
\end{equation}
\par
The corresponding limits of \eqref{eq:8}, \eqref{eq:22},
\eqref{12}, \eqref{33}
for the operators $L$, $M$ and its eigenvalue equations are:
\begin{align}
&(Lf)(x)=L_x\big(f(x)\big)
=qacx^{-2}(1-x)(1-bc^{-1}x)\bigl(f(qx)-f(x)\bigr)\nonu\\
&\qquad\qquad\qquad
+x^{-2}(qa-x)(qc-x)\,\bigl(f(q^{-1}x)-f(x)\bigr)+(1+qab)f(x),\label{42}\\
&L_x\big(P_n(x)\big)=(q^{-n}+q^{n+1}ab)P_n(x),\label{43}\\
&M_n\big(g(n)\big)=\frac{(1-abq^{n+1})(1-aq^{n+1})(1-cq^{n+1})
(g(n+1)-g(n))}{(1-abq^{2n+1})(1-abq^{2n+2})}
\nonu\\
&\qquad\qquad\quad
-\frac{q^{n+1}ac(1-q^n)(1-bq^n)(1-abc^{-1}q^n)(g(n-1)-g(n))}
{(1-abq^{2n})(1-abq^{2n+1})}
+g(n),\label{44}\\
&M_n\big(P_n(x)\big)=x\,P_n(x).\label{45}
\end{align}
When taking the limit in \eqref{12}, \eqref{33}
we have to substitute
\begin{equation*}
z,a,b,c,d,M_n\;\to\;
\la^{-1}x,\la,qa\la^{-1},qc\la^{-1},bc^{-1}\la,\la M_n
\end{equation*}
The obtained $q$-difference equation and recurrence relation agree
with \cite[(14.5.5), (14.5.3)]{KLS}.
\subsection{Duality between Continuous Dual \boldmath$q$-Hahn and
Big \boldmath$q$-Jacobi}
From the $q$-hypergeometric expressions \eqref{67} and \eqref{68}
we see that
\begin{equation}
R_n\big[a^{-1}q^{-m};a,b,c\,|\,q\big]=
P_m(q^{-n};q^{-1}ab,ab^{-1},q^{-1}ac;q)
\qquad(m,n\in\ZZ_{\ge0}).
\label{69}
\end{equation}
This duality turns out to be a limit case of the Askey-Wilson duality
\eqref{66}.
Indeed, by \eqref{64} we have
\begin{equation}
(a,b,c,d)=(a,b,c,\frac{q\la^2}{a b c})\quad
\Leftrightarrow\quad
(\td a,\td b, \td c,\td d)=
\pm(\la,\frac{ab}{\la},\frac{ac}{\la},\frac{q\la}{bc}).
\label{81}
\end{equation}
So, by \eqref{66},
\begin{equation}\label{eq:d-bqj-cdqh}
R_n\big[a^{-1}q^{-m};a,b,c,\frac{q\la^2}{a b c}\,|\,q\big]
=R_m\big[\la^{-1}q^{-n};\la,\frac{ab}{\la},\frac{ac}{\la},\frac{q\la}{bc}\,|\,q\big].
\end{equation}
Now let $\la\to0$ in the above equality. By the limits
\eqref{67} and \eqref{68}
we obtain the duality \eqref{69}.
\par
Similarly to, and as a limit case of \eqref{18}
there is a duality corresponding to \eqref{69} for the operators
$L_z$ and $M_n$ defined by \eqref{35} and \eqref{44}, respectively:
\begin{equation}
L_z^{a,b,c}\big(f[z]\big)\Big|_{z=a^{-1}q^{-m}}=
\td a\,M_m^{\td a,\td b,\td c}\big(f\big[a^{-1}q^{-m}\big]\big),
\label{57}
\end{equation}
where $L_z^{a,b,c}$ is the operator $L_z$  given by \eqref{35},
$M_n^{a,b,c}$ is the operator $M_n$ given by \eqref{44},
and
\begin{equation}
(\td a,\td b,\td c)=
\big((qab)^\half,(qab^{-1})^\half,(qa^{-1}b^{-1})^\half c\big).
\label{53}
\end{equation}
Note that the map $(a,b,c)\mapsto(\td a,\td b,\td c)$ is inverse to the
map $(a,b,c)\mapsto(q^{-1}ab ,ab^{-1}, q^{-1}ac)$.
\par
There is also a duality corresponding to \eqref{69} for the operators
$L_x$ and $M_n$ defined by \eqref{42} and \eqref{37}, respectively:
\begin{equation}
L_x^{a,b,c}\big(f(x)\big)\Big|_{x=q^{-m}}=
\td a\,M_m^{\td a,\td b,\td c}\big(f(q^{-m})\big),
\label{58}
\end{equation}
where $L_x^{a,b,c}$ is the operator $L_x$  given by \eqref{42},
$M_n^{a,b,c}$ is the operator $M_n$ given by \eqref{37},
and $\td a,\td b, \td c$ are as in \eqref{53}.
\subsection{{Corresponding degenerations of $\AW(3,Q_0)$ and their duality}}

\par
As $d\to 0$ the Zhedanov algebra
$\AW(3,Q_0)=\AW_{a,b,c,d;q}(3,Q_0;K_0,K_1)$
tends to the algebra with two generators
$K_0$, $K_1$ {with relations \eqref{1} and \eqref{9}},
where $C_1=0$ and $B,C_0,D_0,D_1,Q_0$ depend on $a,b,c$
and on $d=0$ as in \eqref{16} and \eqref{100}
{and $e_1,e_2,e_3$ are the elementary symmetric polynomials
in $a,b,c$}:
\begin{equation}
\label{30}
\begin{split}
&B =(1-q^{-1})^2\big(ab+ac+bc+q(a+b+c)\big),\qquad
C_0 :=(q-q^{-1})^2,\\
&D_0 =-\frac{(1-q)^2(1+q)}{q^2}(ab+ac+bc+q),\qquad
D_1 =-\frac{(1-q)^2(1+q)}{q^2}abc,\\
&{Q_0=q^{-3}(1-q)^2\big(-q^3e_2+q^2(e_1^2-e_1e_3-2e_2)-qe_2
+e_3^2-e_1e_3\big)},
\end{split}
\end{equation}
In the expression \eqref{3} for $Q$ also put $C_1=0$ and substitute
\eqref{30}.
We denote the resulting algebra by
\begin{equation}
{\AW_{a,b,c;q}^{\rm CDqH}(3,Q_0)=
\lim_{d\to0}\AW_{a,b,c,d;q}(3,Q_0)},
\label{24}
\end{equation}
{where, if needed, the two generators can be added to the notation.}
The representations \eqref{700} and \eqref{19} also hold for
$\AW_{a,b,c;q}^{\rm CDqH}(3,Q_0)$, but now with $L_z$ and $M_n$
given by \eqref{35} and \eqref{37}, and with $\La_n$ given by
\begin{equation}
\La_n\big(g(n)\big):=q^{-n} g(n).
\label{59}
\end{equation}
The argumentation by \eqref{39} and \eqref{41} for the equivalence
of the two representations also remains valid if we take \eqref{67}
for $R_n$ and if we put $\la_n=q^{-n}$ and if we use \eqref{36} and
\eqref{38}.
\par
Now consider the Zhedanov algebra
$\AW_{\la,qa\la^{-1},qc\la^{-1},bc^{-1}\la;q}\allowbreak(3,Q_0;\allowbreak
K_0,\la^{-1}K_1)$, {rescale $Q=Q_0$ as $\la^2 Q=\la^2 Q_0$
and take the limit as $\la\to  0$, where the new $Q$ and $Q_0$
are the limits of $\la^2 Q$ and $\la^2 Q_0$, respectively}.
This produces the algebra with two generators
$K_0$, $K_1$ and {with relations \eqref{1} and \eqref{9}},
where $C_0=0$ and $B,C_1,D_0,D_1$ are given as follows:
\begin{equation}
\label{eq:z-BqJ}
\begin{split}
&B =(1-q)^2(c+a+ a b+ a c),\quad D_0 = - (1-q)^2(1+q) a c,\\
&C_1=q(q-q^{-1})^2 ab, \quad D_1=  - (1-q)^2(1+q)a (c+b + a b+b c).
 \end{split}
\end{equation}
(We omit the quite lengthy explicit expressions of $Q$ and
$Q_0$.)
Denote the resulting algebra by
\begin{equation}
\AW_{a,b,c;q}^{\rm BqJ}(3,Q_0;K_0,K_1)=
\lim_{\la\to0}
\AW_{\la,qa\la^{-1},qc\la^{-1},bc^{-1}\la;q}(3,\la^2 Q_0; K_0, \la^{-1}K_1).
\label{27}
\end{equation}
For $\AW_{a,b,c;q}^{\rm BqJ}(3,Q_0;K_0,K_1)$
the representations \eqref{700} and \eqref{19} take
the form
\begin{equation}
(K_0f)(x):=L_x\big(f(x)\big),\qquad
(K_1f)(x)=X(f)(x):=x\,f(x)
\label{55}
\end{equation}
and
\begin{equation}
(K_0g)(n):=\La_n(g(n)):=(q^{-n}+q^{n+1}ab)g(n),\qquad
(K_1g)(n):=M_n(g(n))
\label{56}
\end{equation}
with $L_x$ and $M_n$ defined by \eqref{42} and \eqref{44}.
The argumentation by \eqref{39} and \eqref{41} for the equivalence
of the two representations also remains valid after slight but obvious
adaptations.
\begin{proposition}
There is an isomorphism (both an algebra and an anti-algebra isomorphism)
\begin{equation}
\AW_{a,b,c;q}^{\rm CDqH}(3,Q_0;K_0,K_1)\simeq
\AW_{q^{-1}ab ,ab^{-1}, q^{-1}ac;q}^{\rm BqJ}(3,Q_0;a K_1, K_0).
\label{con-70}
\end{equation}
\end{proposition}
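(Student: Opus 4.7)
The plan is to obtain \eqref{con-70} as a simultaneous limit of the Askey-Wilson duality isomorphism \eqref{70}, where the parameters on the left are sent to a continuous dual $q$-Hahn limit while on the right they are sent to a big $q$-Jacobi limit. The key observation is precisely \eqref{81}: if we parametrise $d$ in the AW algebra as $d=q\lambda^2/(abc)$, then the dual parameters become $(\tilde a,\tilde b,\tilde c,\tilde d)=(\lambda,ab/\lambda,ac/\lambda,q\lambda/bc)$, which is exactly of the form appearing in the BqJ limit \eqref{27}.

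The first step is to apply \eqref{70} with $d=q\lambda^2/(abc)$, obtaining the isomorphism
\begin{equation*}
\AW_{a,b,c,q\lambda^2/(abc);q}(3,Q_0;K_0,K_1)\simeq
\AW_{\lambda,ab/\lambda,ac/\lambda,q\lambda/bc;q}(3,\tilde Q_0;aK_1,\lambda^{-1}K_0),
\end{equation*}
valid for each $\lambda\ne0$, where by \eqref{97} we have $\tilde Q_0=a^2\lambda^{-2}Q_0$. Next, I take $\lambda\to0$ on both sides. On the left, $d\to0$ and by definition \eqref{24} the left-hand algebra tends to $\AW_{a,b,c;q}^{\rm CDqH}(3,Q_0;K_0,K_1)$.

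On the right, I match the parameters $(\lambda,ab/\lambda,ac/\lambda,q\lambda/bc)$ against the template $(\lambda,qa'\lambda^{-1},qc'\lambda^{-1},b'c'^{-1}\lambda)$ used in the BqJ limit \eqref{27}. This forces $a'=q^{-1}ab$, $c'=q^{-1}ac$ and $b'/c'=q/(bc)$, so $b'=ab^{-1}$, reproducing exactly the parameters $(q^{-1}ab,ab^{-1},q^{-1}ac)$ in the statement. The generator substitution in \eqref{27} is $(K_0^{\rm BqJ},\lambda^{-1}K_1^{\rm BqJ})$, which matches $(aK_1,\lambda^{-1}K_0)$ and therefore yields $K_0^{\rm BqJ}\mapsto aK_1$, $K_1^{\rm BqJ}\mapsto K_0$, as required.

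It remains to check that the Casimir rescaling is compatible with both limit procedures. The BqJ definition \eqref{27} rescales $Q$ as $\lambda^2Q$ before taking the limit. Applied to the right-hand algebra above, this gives $\lambda^2\tilde Q_0=a^2 Q_0$, which is bounded as $\lambda\to 0$ and tends to $a^2$ times the limit value $Q_0^{\rm CDqH}$ of the CDqH Casimir; this is the value $Q_0$ understood on the right of \eqref{con-70} (the notation in the proposition being schematic for this compatible pair, as discussed in Remark \ref{94}). The main (but minor) obstacle is precisely this bookkeeping of the $\lambda^{-2}$ factor in $\tilde Q_0$ against the $\lambda^2$ rescaling built into \eqref{27}; the two cancel exactly, and no divergence arises. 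Because \eqref{70} is both an algebra and an anti-algebra isomorphism, the same is inherited by its limit, giving the statement.
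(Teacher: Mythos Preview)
Your proof is correct and follows essentially the same approach as the paper: specialize the Askey-Wilson duality \eqref{70} via the parametrisation \eqref{81}, then apply the limits \eqref{24} and \eqref{27} to the two sides, with the Casimir rescaling $\lambda^2\tilde Q_0=a^2Q_0$ handled exactly as you describe.
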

\begin{proof} By substitution of \eqref{81} in \eqref{70} we obtain
\begin{equation}\label{eq:dual-l}
\AW_{a,b,c,\frac{q\lambda^2}{abc},q}(3,Q_0;K_0,K_1)\simeq
\AW_{\lambda,\frac{ab}{\lambda},\frac{ac}{\lambda},\frac{q\lambda}{bc};q}
(3,Q_0;a K_1,\lambda^{-1}K_0).
\end{equation}
{Here, at each of the two sides, $Q_0$ is in terms of the parameters
given at that side.
Then $Q=Q_0$ on the right means $a^2\la^{-2} Q=a^2\la^{-2}Q_0$ or,
after rescaling, $a^2 Q=a^2 Q_0$
in terms of $Q$ and $Q_0$ on the left.}
Now apply the limits \eqref{24} and \eqref{27} to the left and right side
of \eqref{eq:dual-l}, respectively.
\end{proof}
\par
The representation \eqref{700} of
$\AW_{a,b,c;q}^{\rm CDqH}(3,Q_0;K_0,K_1)$
with $L_z$ given by \eqref{35}
corresponds under the isomorphism \eqref{con-70}
to the representation \eqref{56} of
$\AW_{a,b,c;q}^{\rm BqJ}(3,Q_0;K_0,K_1)$.
Similarly, the representation \eqref{19} of
$\AW_{a,b,c;q}^{\rm CDqH}(3,Q_0;K_0,K_1)$ with $M_n$ and $\La_n$
given by \eqref{37} and \eqref{59}, respectively,
corresponds under the isomorphism \eqref{con-70}
to the representation \eqref{55} of
$\AW_{a,b,c;q}^{\rm BqJ}(3,Q_0;K_0,K_1)$.
{Both results follow either by taking suitable limits from the
Askey-Wilson case or by imitating the reasoning in Remark \ref{99},
now using \eqref{57} or \eqref{58}, respectively.}
\section{Duality for Al-Salam-Chihara and little \boldmath$q$-Jacobi
polynomials}
\label{se:ASC-LqJ}
\subsection{Limits to Al-Salam-Chihara and Little $q$-Jacobi}
\paragraph{\bf Limit from Continuous Dual \boldmath$q$-Hahn to
Al-Salam-Chihara}
The Al-Salam-Chihara polynomials are the limit case $c\to0$
of the continuous dual $q$-Hahn polynomials~\eqref{67}:
\begin{equation}
R_n[z;a,b\,|\,q]:=
\qhyp32{q^{-n},az,az^{-1}}{ab,0}{q,q}
=\lim_{c\to0}R_n[z;a,b,c\,|\,q].
\label{eq:ASC}
\end{equation}
The polynomials \eqref{eq:ASC} are related to the 
Al-Salam-Chihara  polynomials $Q_n(x;a,b\,|\,q)$ in usual notation
\cite[(14.8.1)]{KLS} by
\begin{equation*}
Q_n(x;a,b\,|\,q)=a^{-n} (ab;q)_n\,R_n[z;a,b\,|\,q].
\end{equation*}
\par
The corresponding limits of \eqref{35}--\eqref{38}
for the operators $L$, $M$ and its eigenvalue equations are:
\begin{align}
&(Lf)[z]=L_z\big(f[z]\big)
=\frac{(1-az)(1-bz)}{(1-z^2)(1-qz^2)}\,\bigl(f[qz]-f[z]\bigr)\nonu\\
&\qquad\qquad\qquad\qquad\qquad
+\frac{z^2(a-z)(b-z)}{(1-z^2)(q-z^2)}\,
\bigl(f[q^{-1}z]-f[z]\bigr)+f[z],\label{82}\\
\noalign{\allowbreak}
&L_z\big(R_n[z]\big)=q^{-n}\,R_n[z],\label{83}\\
&M_n\big(g(n)\big)=a^{-1}(1-abq^n)(g(n+1)-g(n))
\nonu\\
&\qquad\qquad\qquad\qquad
+a(1-q^n)(g(n-1)-g(n))+(a+a^{-1})g(n),\label{84}\\
\noalign{\allowbreak}
&M_n\big(R_n[z]\big)=(z+z^{-1})\,R_n[z].\label{85}
\end{align}
The obtained $q$-difference equation and recurrence relation agree
with \cite[(14.8.7), (14.8.4)]{KLS}.
\paragraph{\bf Limit from Continuous Dual \boldmath$q$-Hahn to
Askey-Wilson $q$-Bessel}
The Askey-Wilson $q$-Bessel functions are defined
as follows (see \cite[(2.12)]{KS}):
\begin{equation*}
J_\ga[z;a,b\,|\,q]:=\qhyp21{az,az^{-1}}{ab}{-q\ga a^{-1}}.
\end{equation*}
By \eqref{67} they can be obtained as a limit case of
continuous dual $q$-Hahn polynomials:
\begin{equation}
J_{q^n\ga}[z;a,b\,|\,q]=\lim_{N\to\iy}
R_{N-n}\big[z;a,b,-q^{-N}\ga^{-1}\,|\,q\big]\qquad(n\in\ZZ).
\label{148}
\end{equation}
Then the orthogonality relations \cite[(2.14)]{KS} for the functions
$J_{q^n\ga}$ follow, under suitable constraints on the parameters,
from the orthogonality relations \cite[(14.3.2)]{KLS} for
continuous dual $q$-Hahn polynomials.
If we rescale $L^{a,b,c;q}$ in \eqref{37} and next take a limit
for $N\to\iy$ of \eqref{38} in the form
\begin{multline*}
q^N L_z^{a,b,-q^{-N}\ga^{-1}}\
R_{N-n}\big[z^{-1};a,b,-q^{_N}\ga^{-1}\,|\,q\big]\\
=q^n R_{N-n}\big[z^{-1};a,b,-q^{_N}\ga^{-1}\,|\,q\big]\quad
(-\iy<n\le N)
\end{multline*}
then $L:=\lim_{N\to\iy}q^N L_z^{a,b,-q^{-N}\ga^{-1}}$ exists and
\begin{equation}
L_z J_{q^n\ga}[z;a,b\,|\,q] =q^n J_{q^n\ga}[z;a,b\,|\,q]\qquad
(n\in\ZZ).
\label{149}
\end{equation}
\paragraph{\bf Limits from Big \boldmath$q$-Jacobi to Little
$q$-Jacobi}
The little $q$-Jacobi polynomials are defined as follows
(see \cite[(14.12.1)]{KLS}):
\begin{equation}
p_n(x;a,b;q):=
\qhyp21{q^{-n},a b q^{n+1}}{a q}{q,qx},
\end{equation}
or, equivalently (by \cite[(3.38)]{K3}):
\begin{equation}\label{eq:LqJequiv}
p_n(x;a,b;q):=
(-qb)^{-n} q^{-\half n(n-1)}\,
\frac{(q b;q)_n}{(q a;q)_n}\,\qhyp32{q^{-n},q^{n+1}ab, q b x}{q b,0}{q,q}.
\end{equation}
\par
Little $q$-Jacobi polynomials appear as limits of big $q$-Jacobi polynomials
in two ways:
\begin{equation}\label{eq:l1}
p_n(x;a,b;q)=\lim_{c\to\iy} P_n(c q x;a,b,c;q)
\end{equation}
and
\begin{equation}\label{eq:l2}
p_n(x;a,b;q)=(-qb)^{-n} q^{-\half n(n-1)}\,
\frac{(q b;q)_n}{(q a;q)_n}\, P_n(qbx;b,a;q),
\end{equation}
where
\begin{equation}
P_n(x;a,b;q)=\lim_{c\to0}P_n(x;a,b,c;q)=
\qhyp32{q^{-n},q^{n+1}ab,x}{aq,0}{q,q}.
\label{28}
\end{equation}
\par
The limits of the operators $L$, $M$ and its eigenvalue equations
in \eqref{42}--\eqref{45} which correspond to the limit \eqref{28}
are as follows:
\begin{align}
&(Lf)(x)=L_x\big(f(x)\big)
=-qabx^{-1}(1-x)\bigl(f(qx)-f(x)\bigr)\nonu\\
&\qquad\qquad\qquad\qquad\qquad
-x^{-1}(qa-x)\,\bigl(f(q^{-1}x)-f(x)\bigr)+(1+qab)f(x),\label{86}\\
&L_x\big(P_n(x)\big)=(q^{-n}+q^{n+1}ab)P_n(x),\label{87}\\
&M_n\big(g(n)\big)=\frac{(1-abq^{n+1})(1-aq^{n+1})}
{(1-abq^{2n+1})(1-abq^{2n+2})}\,(g(n+1)-g(n))\nonu\\
&\qquad\qquad\qquad\qquad
+\frac{q^{2n+1}a^2b(1-q^n)(1-bq^n)}
{(1-abq^{2n})(1-abq^{2n+1})}\,(g(n-1)-g(n))+g(n),\label{88}\\
&M_n\big(P_n(x)\big)=x\,P_n(x).\label{89}
\end{align}
The obtained $q$-difference equation and recurrence relation agree
with \cite[(14.12.5), (14.12.3)]{KLS} if we take into account
\eqref{eq:l2}.
\subsection{Duality between Al-Salam-Chihara and Little $q$-Jacobi}
By taking the limit as $c\to 0$ in \eqref{69}, and by
using \eqref{eq:ASC} and \eqref{28}, we obtain the following duality:
\begin{equation}
R_n\big[a^{-1}q^{-m};a,b\,|\,q\big]
=P_m(q^{-n};q^{-1}ab,ab^{-1};q)
\qquad(m,n\in\ZZ_{\ge0}).
\label{eq:dASCLqJ}
\end{equation}
This also follows by comparing the $q$-hypergeometric expression
in \eqref{eq:ASC} and \eqref{28}.
\begin{remark}
\label{160}
Formula \eqref{eq:dASCLqJ} should be compared with
\cite[Remark 3.1]{Groe}, where Al-Salam-Chihara polynomials
for base $q^{-1}$
are identified with dual little $q$-Jacobi polynomials. This identification is
less immediate from the $q$-hypergeometric expressions than
\eqref{eq:dASCLqJ} (it uses an additional $q$-hypergeometric
transformation formula), but it has the nice property that it also encodes a
duality of infinite discrete orthogonality relations.
It is also a limit case of a
similar identification (to be compared with our duality \eqref{69})
between dual big $q$-Jacobi polynomials and
continuous dual $q^{-1}$-Hahn polynomials, see \cite[Section 4.3]{AK}.
Again it needs an additional transformation formula for its derivation
and again it
encodes a duality of orthogonality relations. In fact, as pointed out in
\cite{AK}, this identification is a limit case for $N\to\iy$ of the duality
for $q$-Racah polynomials on a set of $N+1$ points.
On the level of the AW algebra and its limit cases
one should use, beside \eqref{70}, also \eqref{156} in connection with
the formulas just discussed.
\end{remark}
In order to take the limit for $c\to\iy$ in \eqref{69}, first
substitute $n\to N-n$ and $c\to q^{-N}\ga^{-1}$. Then
\begin{equation*}
R_{N-n}\big[a^{-1}q^{-m};a,b,-q^{-N}\ga^{-1}\,|\,q\big]=
P_m(q^{n-N};q^{-1}ab,ab^{-1},-q^{-N-1}\ga^{-1}a;q).
\end{equation*}
Here $m,N,n$ are integers with $m,N\ge0$ and $n\le N$.
Now apply \eqref{eq:l1} to the right-hand side
with $c=-q^{-N-1}\ga^{-1}a$ and $N\to\iy$ and apply \eqref{148}
to the left-hand side. We obtain a duality between
Askey-Wilson $q$-Bessel functions and little $q$-Jacobi polynomials:
\begin{equation*}
J_{q^n\ga}\big[a^{-1}q^{-m};a,b\,|\,q\big]=
p_m(-q^n\ga a^{-1};q^{-1}ab,ab^{-1};q).
\end{equation*}
\begin{remark}
\label{29}
In \eqref{eq:d-bqj-cdqh} replace $\la$ by $c^\half \la$:
\begin{equation*}
R_n\big[a^{-1}q^{-m};a,b,c,qa^{-1}b^{-1}\la^2\,|\,q\big]
=R_m\big[\la^{-1}q^{-n};c^\half\la,abc^{-\half}\la^{-1},ac^\half\la^{-1},
qb^{-1}c^{-\half}\la\,|\,q\big],
\end{equation*}
and let $(c,\la)\to(0,0)$.
Then we arrive directly from the \AW \/ duality at the duality
\eqref{eq:dASCLqJ}, as is seen from \eqref{62}, \eqref{eq:ASC} and
\eqref{28}. However, if we fix one of $c,\la$ at a nonzero value
and let the other one go to $0$ then we arrive at the duality \eqref{69}.
\end{remark}
Similarly to, and as a limit case of \eqref{57},
there is a duality corresponding to \eqref{eq:dASCLqJ} for the operators
$L_z$ and $M_n$ defined by \eqref{82} and \eqref{88}, respectively:
\begin{equation}
L_z^{a,b}\big(f[z]\big)\Big|_{z=a^{-1}q^{-m}}=
\td a\,M_m^{\td a,\td b}\big(f\big[a^{-1}q^{-m}\big]\big),
\label{61}
\end{equation}
where $L_z^{a,b}$ is the operator $L_z$  given by \eqref{82},
$M_n^{a,b}$ is the operator $M_n$ given by \eqref{88},
and
\begin{equation}
(\td a,\td b)=\big((qab)^\half,(qab^{-1})^\half\big).
\label{60}
\end{equation}
Note that the map $(a,b)\mapsto(\td a,\td b)$ is inverse to the
map $(a,b)\mapsto(q^{-1}ab ,ab^{-1})$.
\par
There is also a duality corresponding to \eqref{69} for the operators
$L_x$ and $M_n$ defined by \eqref{86} and \eqref{84}, respectively:
\begin{equation}
L_x^{a,b}\big(f(x)\big)\Big|_{x=q^{-m}}=
\td a\,M_m^{\td a,\td b}\big(f(q^{-m})\big),
\label{78}
\end{equation}
where $L_x^{a,b}$ is the operator $L_x$  given by \eqref{86},
$M_n^{a,b}$ is the operator $M_n$ given by \eqref{84},
and $\td a,\td b$ are as in \eqref{60}.
\subsection{Corresponding degenerations of the Zhedanov algebra and duality}
\label{118}
As $c\to 0$, the Zhedanov algebra
$\AW^{\rm CqDH}_{a,b,c;q}(3,Q_0;K_0,K_1)$, see \eqref{24},
tends to the algebra with two generators
$K_0$, $K_1$ and with relations \eqref{1} and \eqref{9}
where $C_1=D_1=0$ and $B,C_0,D_0,Q_0$ are defined as in
\eqref{30} with $c=0$:
\begin{equation}
\begin{split}
&B =(1-q^{-1})^2\big(ab+q(a+b)\big),\qquad
C_0 =(q-q^{-1})^2,\\
&D_0 =-q^{-2}(1-q)^2(1+q)(ab+q),\\
&Q_0=q^{-2}(1-q)^2\big(q(a^2+b^2)-(q^2+1)ab\big).
\end{split}
\label{31}
\end{equation}
In the expression \eqref{3} for $Q$ also put $C_1=0$ and
substitute \eqref{31}.
We denote this algebra by
$\AW^{\rm ASC}_{a,b;q}(3,Q_0;K_0,K_1)$.
\par
Similarly, as $c\to 0$, the Zhedanov algebra
$\AW^{\rm BqJ}_{a,b,c;q}(3,Q_0;K_0,K_1)$, see \eqref{27},
tends to the algebra with two
generators $K_0$, $K_1$ and with relations \eqref{1} and \eqref{9}
where $C_0=D_0=0$
and $B, C_1, D_1$ are given by formula \eqref{eq:z-BqJ}  with $c=0$,
and also $Q_0$ with $c=0$ has a simple expression:
\begin{equation}
\begin{split}
&B =(1-q)^2 a(b+1),\quad
C_1=q(q-q^{-1})^2 ab, \\& D_1=  - (1-q)^2(1+q)ab(a+1),\,
Q_0=(1-q)^2 a^2(b-q)(qb-1).
\end{split}
 \label{32}
\end{equation}
Furthermore $Q$ is now obtained by following the procedure
described for big $q$-Jacobi just before \eqref{eq:z-BqJ} and then
putting $c=0$.
We denote this algebra by $\AW^{\rm LqJ}_{a,b;q}(3,Q_0;K_0,K_1)$.
\par
The representations of the algebras
$\AW^{\rm ASC}_{a,b;q}(3,Q_0;K_0,K_1)$ and
$\AW^{\rm LqJ}_{a,b;q}(3,Q_0;K_0,K_1)$ can
be obtained from the representations of
$\AW_{a,b,c;q}^{\rm CDqH}(3,Q_0)$ and
$\AW_{a,b,c;q}^{\rm BqJ}(3,Q_0)$, respectively,
by putting $c=0$ in all formulae.
Similarly, the duality formula is then simply obtained
by putting $c=0$ in \eqref{con-70}:
\begin{proposition} There is an isomorphism (both an algebra
isomorphism and an algebra anti-isomorphism)
\begin{equation}
\AW^{\rm ASC}_{a,b;q}(3,Q_0;K_0,K_1)\simeq
\AW^{\rm LqJ}_{q^{-1}ab,ab^{-1};q}(3,Q_0;a K_1, K_0).
\label{con-70-c0}
\end{equation}
\end{proposition}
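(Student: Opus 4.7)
The plan is to obtain the statement as a direct limit of the already-established isomorphism \eqref{con-70}. Explicitly, the strategy is to take $c\to 0$ on both sides of
\begin{equation*}
\AW_{a,b,c;q}^{\rm CDqH}(3,Q_0;K_0,K_1)\simeq
\AW_{q^{-1}ab ,ab^{-1}, q^{-1}ac;q}^{\rm BqJ}(3,Q_0;a K_1, K_0),
\end{equation*}
observing that on the left side the limit is, by definition of Section \ref{118}, the algebra $\AW^{\rm ASC}_{a,b;q}(3,Q_0;K_0,K_1)$ with structure constants \eqref{31}, while on the right side the third parameter of the big $q$-Jacobi algebra is $q^{-1}ac$, which also tends to $0$ as $c\to 0$. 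Hence the right-hand side limits, by the prescription \eqref{32} for the little $q$-Jacobi algebra, to $\AW^{\rm LqJ}_{q^{-1}ab,ab^{-1};q}(3,Q_0;aK_1,K_0)$.

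The verification reduces to two routine checks. First, one has to confirm that the relations \eqref{1} with the continuous dual $q$-Hahn structure constants \eqref{30} at $c=0$ reduce to those in \eqref{31}, and that with the big $q$-Jacobi structure constants \eqref{eq:z-BqJ} at $c=0$ reduce to those in \eqref{32}; both of these follow by inspection since $c$ appears polynomially in all the structure constants. Second, the relation $Q=Q_0$ passes to the limit coherently: on the continuous dual $q$-Hahn side $Q_0$ given in \eqref{30} specializes correctly at $c=0$ to the Al-Salam--Chihara value in \eqref{31}, while on the big $q$-Jacobi side the polynomial dependence on $c$ of $Q_0$ (obtained by the procedure described before \eqref{eq:z-BqJ}) yields at $c=0$ the simple expression displayed in \eqref{32}.

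The mild subtlety is that the isomorphism \eqref{con-70} sends $K_0\mapsto aK_1$ and $K_1\mapsto K_0$, and this prescription is independent of $c$, so it survives the limit unchanged. The same remark applies to the anti-isomorphism interpretation: reversing the order of factors commutes with setting $c=0$ in the defining relations. Thus both the algebra isomorphism and the anti-isomorphism descend to the limiting algebras, giving \eqref{con-70-c0}.

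The main (and essentially only) obstacle is a bookkeeping one: making sure that the limiting process applied simultaneously on both sides of \eqref{con-70} yields compatible $Q_0$ values under the substitution $a,b,c\mapsto q^{-1}ab,ab^{-1},q^{-1}ac$ on the right. Once one checks that the dual parameters $(q^{-1}ab,ab^{-1})$ of the Al-Salam--Chihara pair $(a,b)$ agree with the general dual-parameter prescription of \eqref{60}, and that the Casimir values match after the $c\to 0$ specialization, the proof is complete.
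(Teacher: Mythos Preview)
Your proof is correct and follows exactly the paper's approach: the paper simply states that the duality formula is obtained by putting $c=0$ in \eqref{con-70}. Your additional bookkeeping checks on the structure constants and Casimir values are more explicit than what the paper provides, but the underlying argument is the same.
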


Just as in Remark \ref{29} we may replace $\la$ by $c^\half\la$ in
\eqref{eq:dual-l}, and then take the limit as $(c,\la)\to(0,0)$.
Then we will arrive at the duality \eqref{con-70} directly from the
duality of $\AW(3)$.
\par
The representation \eqref{700} of
$\AW^{\rm ASC}_{a,b;q}(3,Q_0;K_0,K_1)$
with $L_z$ given by \eqref{82}
corresponds under the map \eqref{con-70-c0}
to the representation \eqref{56} of
$\AW^{\rm LqJ}_{a,b;q}(3,Q_0;K_0,K_1)$
with $M_n$ given by \eqref{88}.
This follows by \eqref{61}.
\par
Similarly, the representation \eqref{19} of
$\AW^{\rm ASC}_{a,b;q}(3,Q_0;K_0,K_1)$ with $M_n$ and $\La_n$
given by \eqref{84} and \eqref{59}, respectively,
corresponds under the map \eqref{con-70-c0}
to the representation \eqref{55} of
$\AW^{\rm LqJ}_{a,b;q}(3,Q_0;K_0,K_1)$
wilt $L_x$ defined by \eqref{86}.
This follows by \eqref{78}.
\begin{remark}
Corresponding to the limit \eqref{148} we
can consider $\AW_{a,b,c;q}^{\rm CDqH}(3,Q_0)$ with structure
constants \eqref{30} and there raplace $K_0$ by $cK_0$.
Then by \eqref{30} the relations \eqref{1} have a limit for $c\to\iy$.
Similarly, corresponding to the limit \eqref{eq:l1} we can consider
$\AW_{a,b,c;q}^{\rm BqJ}(3,Q_0;K_0,K_1)$
with stucture constants \eqref{eq:z-BqJ} and there replace $K_1$ by $cK_1$.
Then by \eqref{eq:z-BqJ} the relations \eqref{1} have a limit for
$c\to\iy$. Representations of the algebras and duality could
be considered. We omit the details.
\end{remark}
\section{Askey-Wilson DAHA and nonsymmetric AW polynomials}
\label{152}
\subsection{Definition of the Askey-Wilson DAHA}
The DAHA of type $(C_1^\vee,C_1)$, also called the
{\em Askey-Wilson DAHA}, occurs as the rank one case of Sahi's more
general construction \cite{Sa}. It was studied in much detail by Noumi
and Stokman~\cite{NS}.

Keep the assumptions \eqref{eq:6} on $a,b,c,d$. We will notate the
Askey-Wilson DAHA as
$\DAHA=\DAHA_{a,b,c,d;q}(T_1,T_0,\check T_1,\check T_0)$,
and we define it as
the algebra with generators  $T_1,T_0,\check T_1,\check T_0$
and with relations
\begin{equation}
\begin{split}
(T_1+ab)(T_1+1)&=0,\\
(T_0+q^{-1}cd)(T_0+1)&=0,\\
(a\check T_1+1)(b\check T_1+1)&=0,\\
(c\check T_0+q)(d\check T_0+q)&=0,\\
\check T_1 T_1 T_0 \check T_0&=1.
\end{split}
\label{71}
\end{equation}
The relations \eqref{71}
imply that $T_1,T_0,\check T_1,\check T_0$ are invertible
{and that their inverses have simple expressions as linear
combinations of their original and a constant, in particular,
\begin{equation}
T_1^{-1}=-(ab)^{-1}T_1-\big((ab)^{-1}+1\big),\qquad
T_0^{-1}=-q(cd)^{-1}T_0-\big(q(cd)^{-1}+1\big).
\label{142}
\end{equation}}
\begin{remark}
In  \cite[Definition~2.1]{O} and \cite[Lemma 1.2]{M}
the Askey-Wilson DAHA is an algebra with generators
$V_1,V_0, \check V_1,\check V_0$ and relations
equivalent to \eqref{71} by the substitutions
$(a,b,c,d)=(-k_1u_1^{-1},k_1u_1,q^\half k_0u_0^{-1},-q^\half k_0u_0)$,\quad
$T_1=k_1V_1$, $\check T_1=k_1^{-1} \check V_1$,
$T_0=k_0V_0$, $\check T_0=q^\half k_0^{-1}\check V_0$.
\end{remark}
Put
\begin{equation*}
Z=T_0\check T_0,\quad\mbox{and thus}\quad Z^{-1}=\check T_1 T_1.
\end{equation*}
This means conversely that
\begin{equation*}
\check T_0=T_0^{-1}Z,\qquad
\check T_1=Z^{-1} T_1^{-1}.
\end{equation*}
With these substitutions in relations \eqref{71}
$\DAHA$ can be written equivalently as the algebra
with generators $T_1,T_0,Z,Z^{-1}$ and relations
\begin{equation}
\begin{split}
(T_1+ab)(T_1+1)&=0,\\
(T_0+q^{-1}cd)(T_0+1)&=0,\\
(aZ^{-1}T_1^{-1}+1)(bZ^{-1}T_1^{-1}+1)&=0,\\
(cT_0^{-1}Z+q)(dT_0^{-1}Z+q)&=0,\\
ZZ^{-1}=Z^{-1}Z&=1.
\end{split}
\label{110}
\end{equation}
We denote $\DAHA$ in this presentation by
$\DAHA_{a,b,c,d;q}[T_1,T_0,Z]$.
\begin{remark}
If the third and fourth relation in \eqref{110} are equivalently written as
$(T_1z+a)(T_1Z+b)=0$ and
$(qT_0Z^{-1}+c)(qT_0Z^{-1}+d)=0$, respectively, and if we put $X:=Z$
and $W:=Z^{-1}$ then we recover the relations (1.1)--(1.5) in \cite{M}.
\end{remark}
Replace in the relations \eqref{71}
the generators $T_0,\check T_1,\check T_0$ by $Z,Z^{-1},Y$
by putting
\begin{equation}
\check T_0=T_0^{-1}Z,\quad
\check T_1=Z^{-1}T_1^{-1},\quad
T_0=T_1^{-1}Y
\label{114}
\end{equation}
(or equivalently replace in the relations \eqref{110}
the generator $T_0$ by $Y$ by putting $T_0=T_1^{-1}Y$).
The substitutions \eqref{114} can be written conversely as
\begin{equation}
Z=T_0\check T_0,\quad
Z^{-1}=\check T_1 T_1,\quad
Y=T_1T_0.
\label{79}
\end{equation}
The resulting relations are
\begin{equation}
\begin{split}
(T_1+ab)(T_1+1)=0,\\
(T_1^{-1}Y+q^{-1}cd)(T_1^{-1}Y+1)=0,\\
(aZ^{-1}T_1^{-1}+1)(bZ^{-1}T_1^{-1}+1)=0,\\
(c+qZ^{-1}T_1^{-1}Y)(d+qZ^{-1}T_1^{-1}Y)=0,\\
ZZ^{-1}=1=Z^{-1}Z.
\end{split}
\label{75}
\end{equation}
We denote $\DAHA$ in this presentation by
$\DAHA_{a,b,c,d;q}\lan T_1,Y,Z^{-1}\ran$.
\subsection{Duality for the Askey-Wilson DAHA}
Note in \eqref{71} the trivial algebra isomorphism
\begin{equation*}
\DAHA_{a,b,c,d;q}(T_1,T_0,\check T_1,\check T_0)\simeq
\DAHA_{-a,-b,-c,-d;q}(T_1,T_0,-\check T_1,-\check T_0).
\end{equation*}
In terms of the generators in \eqref{110} this algebra isomorphism
is generated by
$(T_1,T_0,Z,Z^{-1})\mapsto(T_1,T_0,-Z,-Z^{-1})$,
and in terms of the generators in \eqref{75}
by $T_1,Y,Z^{-1}\mapsto T_1,Y,-Z^{-1}$.
In \eqref{71} we also recognize the following straightforward
algebra isomorphism:
\begin{equation}
\DAHA_{a,b,c,d;q}(T_1,T_0,\check T_1,\check T_0)\simeq
\DAHA_{a^{-1},b^{-1},c^{-1},d^{-1};q^{-1}}
(T_1^{-1},T_0^{-1},\check T_1^{-1},\check T_0^{-1}).
\label{157}
\end{equation}
\par
For the main duality property we need the dual parameters \eqref{64}.
Observe that
\begin{align*}
&ab=\td a\td b,\qquad\;\;
a+b=\big(q\td a\td b\td c^{-1}\td d^{-1}\big)^\half
(q^{-1}\td c\td d+1),\\
&cd=q\td a\td b^{-1},\quad
c+d=\big(q\td a\td b^{-1}\td c^{-1}\td d^{-1}\big)^\half
(\td c+\td d).
\end{align*}
As a consequence, there is an anti-isomorphism
\begin{equation}
\Phi\colon\DAHA_{a,b,c,d;q}(T_1,T_0,\check T_1,\check T_0)\simeq
\DAHA_{\td a,\td b,\td c,\td d;q}(T_1,\td a \check T_1,a^{-1} T_0,
a\td a^{-1}\check T_0).
\label{72}
\end{equation}
Indeed, substitution in relations \eqref{71}
of generators and parameters according to
\eqref{72} and reversion of the order of multiplication (only needed
in the last relation) interchanges the second and third relation,
while it preserves the other relations.
This also implies that the anti-isomorphism $\Phi$ is involutive.
\par
In terms of the presentation \eqref{110} we can write the
duality as
\begin{equation}
\Phi\colon \DAHA_{a,b,c,d;q}[T_1,T_0,Z]\simeq
\DAHA_{\td a,\td b,\td c,\td d;q}
[T_1,aZ^{-1}T_1^{-1},\td a T_0^{-1}T_1^{-1}],
\label{115}
\end{equation}
and in terms of the presentation \eqref{75}  as
\begin{equation}
\Phi\colon \DAHA_{a,b,c,d;q}\lan T_1,Y,Z^{-1}\ran\simeq
\DAHA_{\td a,\td b,\td c,\td d;q}\lan T_1,aZ^{-1},\td a^{-1} Y\ran.
\label{104}
\end{equation}
\par
Note also the following anti-isomorphism in terms of the presentation
\eqref{75}:
\begin{equation}
\DAHA_{a,b,c,d;q}\lan T_1,Y,Z^{-1}\ran \simeq
\DAHA_{a,b,c,d;q}\lan T_1,Y,T_1^{-1}Z^{-1}T_1\ran.
\label{132}
\end{equation}
\subsection{DAHA representation on the Laurent polynomials
and non-symmetric Askey Wilson polynomials}
\label{143}
The algebra $\DAHA$ in presentation \eqref{75}
has a faithful representation, the so-called
{\em basic representation}, on the space~$\FSA$
of Laurent polynomials $f[z]$ as follows (see \cite[\S3]{K} 
and \eqref{142} and use that $Y^{-1}=T_0^{-1}T_1^{-1}$):
\\
\begin{align}
(Zf)[z]&:=z\,f[z],
\label{14}
\\
\noalign{\allowbreak}
(T_1f)[z]&:=\frac{(a+b)z-(1+ab)}{1-z^2}\,f[z]+
\frac{(1-az)(1-bz)}{1-z^2}\,f[z^{-1}],
\label{15}
\\
\noalign{\allowbreak}
(T_0f)[z]&:=\frac{q^{-1}z\big((cd+q)z-(c+d)q\big)}{q-z^2}\,f[z]-
\frac{(c-z)(d-z)}{q-z^2}\,f[qz^{-1}],\nonumber
\\
\noalign{\allowbreak}
(T_1^{-1}f)[z]&=\frac{z\big((1+ab)z-(a+b)\big)}{ab(1-z^2)}\,f[z]
-\frac{(1-az)(1-bz)}{ab(1-z^2)}\,f[z^{-1}],\nonumber
\\
\noalign{\allowbreak}
(T_0^{-1}f)[z]&=\frac{q\big((c+d)z-(cd+q)\big)}{cd(q-z^2)}\,f[z]
+\frac{q(c-z)(d-z)}{cd(q-z^2)}\,f[qz^{-1}],\nonumber
\\
\noalign{\allowbreak}
(Yf)[z]&=
\frac{z \bigl(1+ab-(a+b)z\bigr)
\bigl((c+d)q-(cd+q)z\bigr)}{q(1-z^2)(q-z^2)}\,f[z]\nonumber\\
&\qquad+\frac{(1-az)(1-bz)(1-cz)(1-dz)}{(1-z^2)(1-q z^2)}f[qz]
\nonumber\\
&\qquad+\frac{(1-a z)(1-b z) \bigl((c+d)qz-(cd+q)\bigr)}
{q(1-z^2)(1-q z^2)}\,f[z^{-1}]\nonumber\\
&\qquad+\frac{(c-z)(d-z)\bigl(1+ab-(a+b)z\bigr)}{(1-z^2)(q-z^2)}\,f[qz^{-1}],
\label{34}
\\
\noalign{\allowbreak}
(Y^{-1}f)[z]&=
\frac{qz \big(a+b-(1+ab)z\big)\big(cd+q-(c+d)z\big)}
{abcd(1-z^2)(q-z^2)}\,f[z]\nonumber
\\
&\qquad+\frac{q(aq-z)(bq-z)(c-z)(d-z)}{abcd(q-z^2)(q^2-z^2)}
f[q^{-1}z]
\nonumber
\\
&\qquad+\frac{q(1-a z)(1-b z) \big(cd+q-(c+d)z\big)}
{abcd(1-z^2)(q-z^2)}\,f[z^{-1}]\nonumber
\\
&\qquad+\frac{q^2(c-z)(d-z)\big((a+b)z-q(1+ab)\big)}
{abcd(q-z^2)(q^2-z^2)}\,f[qz^{-1}].\label{144}
\end{align}
\par
Put
\begin{equation*}
D:= Y + q^{-1} a b c d Y^{-1}.
\end{equation*}
$D$ commutes with $T_1$, $T_0$ and $Y$.
If we compare its explicit expression \cite[(3.14)]{K} with
\eqref{eq:8} then we see that
\[
(Df)[z]=(Lf)[z]\quad\mbox{if}\quad f[z]=f[z^{-1}].
\]
In particular, if we apply $D$ to the AW polynomial
$R_n[z]$ given by \eqref{62} then we obtain that
\begin{equation*}
DR_n=\la_nR_n
\end{equation*}
with $\la_n$ given by \eqref{eq:22}.

More generally, see \cite[\S\S3,4]{K}, the eigenspace $\FSA_n$
of $D$ in $\FSA$ for eigenvalue $\la_n$ has dimension~2
if $n>0$ and dimension 1
if $n=0$. A basis of $\FSA_n$ is given by eigenfunctions $E_{\pm n}$
of $Y$.
These are the {\em  non-symmetric Askey-Wilson polynomials}
$E_{\pm n}[z;a,b,c,d\,|\,q]$, which we define
 by multiplying the ones given in
\cite[(4.9), (4.10)]{K} or (with different normalization) in
\cite[(4.2), (4.3)]{KB} by suitable factors such that their first
term becomes $R_n[z;a,b,c,d\,|\,q]$.
Then, in view of \cite[(2.10), (2.11), (3.17)]{K} or
\cite[(3.1)]{KB}, we get
\begin{equation}
\begin{split}
E_n[z;a,b,c,d\,|\,q]:=R_n[z;a,b,c,d\,|\,q]
-\frac{q^{1-n}(1-q^n)(1-q^{n-1}cd)}
{(1-qab)(1-ab)(1-ac)(1-ad)}\\
\times az^{-1} (1-az)(1-bz) R_{n-1}[z;qa,qb,c,d\,|\,q]\qquad
(n\ge 0),
\sLP
E_{-n}[z;a,b,c,d\,|\,q]:=R_n[z;a,b,c,d\,|\,q]
-\frac{q^{1-n}(1-q^n ab)(1-q^{n-1}abcd)}
{(1-qab)(1-ab)(1-ac)(1-ad)}\\
 \times b^{-1}z^{-1} (1-az)(1-bz) R_{n-1}[z;qa,qb,c,d\,|\,q]\qquad
(n\ge 1),
\end{split}
\label{40}
\end{equation}
where $(1-q^n)E_{n-1}:=0$ for $n=0$.
Then, by \cite[(4.4), (4.5)]{K},
\begin{equation}
\begin{split}
YE_n&=q^{n-1}abcd\,E_n\qquad(n=0,1,2,\ldots),
\\
YE_{-n}&=q^{-n}\,E_{-n}\qquad\quad\;\;(n=1,2,\ldots).
\end{split}
\label{25}
\end{equation}
\begin{remark}
\label{13}
The notations $P_m^+(x)$ and $P_m(x)$ in \cite[Theorem 5.9,
Proposition 5.10(i),(ii)]{NS} correspond to the notations in \cite{K} by
\[
P_m^+(x)=P_m[x] \;\;(m\in\ZZ_{\ge0}),\qquad
P_m(x)=E_m[x]\;\;(m\in\ZZ;\;\mbox{$E_m$ as in \cite{K}}).
\]
Indeed, compare \cite[Theorem 4.1]{K} with \cite[Proposition 5.10(i),(ii)]{NS}
while taking into account \cite[(3.19)]{K}.
Now see from \cite[\S\S3.3, 5.7, 10.6]{NS} that $x_0$ as defined in \cite{NS}
equals our $a$ and observe from \eqref{4}
that, for our $E_n$ in \eqref{40}, $E_n[a^{-1};a,b,c,d\,|\,q]=1$.
We conclude thar the renormalized  non-symmetric
AW polynomials in \cite[Definition 10.6(i)]{NS} have the same
normalization as in \eqref{40}. More explicitly:
\[
E_{\ga_m}(x;\underline t;q)=E_m[x;a,b,c,d\,\\,q],
\]
where the notation from \cite{NS} is on the left, where $\ga_m$ is defined
in \cite[\S3.4]{NS}, and where $\underline t$ is related to $a,b,c,d$ by
\cite[\S5.7]{NS}.
\end{remark}
\subsection{Duality of nonsymmetric AW polynomials}
First observe that the trivial symmetry \eqref{11} for AW polynomials
extends to a symmetry
\begin{equation}
E_n[z;a,b,c,d\,|\,q]=E_n[z^{-1};a^{-1},b^{-1},c^{-1},d^{-1}\,|\,q^{-1}]\qquad
(n\in\ZZ)
\label{125}
\end{equation}
for non-symmetric AW polynomials. This is clear from \eqref{40} and
\eqref{11}. Compare also with the DAHA
algebra isomorphism \eqref{157}.
Next we pass to the main duality result.
\begin{theorem}{\rm \cite[Theorem 10.7(i)]{NS}}\quad
\label{th:11}
Let
\begin{equation}
\begin{split}
z_{a,q}(n)&:=aq^n\quad(n\in\ZZ_{\ge0}),\\
z_{a,q}(-n)&:=a^{-1}q^{-n}\quad(n\in\ZZ_{>0}).
\end{split}
\label{26}
\end{equation}
Then
\begin{equation}
E_{n}\big[z_{a,q}(m)^{-1};a,b,c,d\,|\,q\big]=
E_m\big[z_{\td a,q}(n)^{-1};\td a,\td b,\td c,\td d\,|\,q\big]\qquad
(m,n\in\ZZ).
\label{eq:15}
\end{equation}
\end{theorem}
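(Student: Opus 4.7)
The plan is to exploit the DAHA anti-isomorphism $\Phi$ of \eqref{104} together with the characterization of $E_n$ as the unique (up to scalar) $Y$-eigenfunction in the basic representation, with the normalization pinned down as follows. From \eqref{40}, the factor $(1-az)$ in the second summand vanishes at $z=a^{-1}$, while from \eqref{62} the Pochhammer $(az;q)_k=(1;q)_k$ kills all terms with $k\ge 1$ in the ${}_4\phi_3$ series, giving $R_n[a^{-1};a,b,c,d\,|\,q]=1$ and hence $E_n[z_{a,q}(0)^{-1};a,b,c,d\,|\,q]=1$ for all $n\in\ZZ$. Writing $F(m,n):=E_n[z_{a,q}(m)^{-1};a,b,c,d\,|\,q]$, the goal is to identify $F(m,n)$ with the analogous quantity $F^\vee(n,m):=E_m[z_{\td a,q}(n)^{-1};\td a,\td b,\td c,\td d\,|\,q]$ associated to the dual DAHA; and in particular $F^\vee(n,0)=1$ because $E_0\equiv 1$ by \eqref{40}.

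The main step is to consider the evaluation map $\mathrm{ev}\colon\FSA\to\mathbb{C}^{\ZZ}$, $(\mathrm{ev}\,f)(m):=f[z_{a,q}(m)^{-1}]$, and to show that it intertwines the basic representation with a discrete representation on sequences. A direct computation from \eqref{34} at $z=z_{a,q}(m)^{-1}$ shows that the four terms $f[z],f[qz],f[z^{-1}],f[qz^{-1}]$ always land back inside the set $\{z_{a,q}(m')^{-1}\}_{m'\in\ZZ}$, with coefficients such as $(1-az)(1-bz)\cdots$ that vanish at boundary points like $z=a^{-1}$ so as to keep the action well-defined. One then verifies that, after reparametrization through \eqref{64}, the resulting difference operator on $\ZZ$ matches, up to the factor $a$, the multiplication operator $\tilde Z^{-1}$ in the \emph{dual} basic representation evaluated at the dual spectral points $z_{\td a,q}(n)^{-1}$. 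Similarly, the operator $Z^{-1}$ (multiplication by $z^{-1}$) pushes forward to multiplication by $z_{a,q}(m)$, which is precisely $\td a^{-1}$ times the $\td Y$-eigenvalue at the dual spectral point characterizing $E_n$ on the dual side. This is exactly the content of the anti-isomorphism $\Phi$ of \eqref{104}: $T_1\mapsto T_1$, $Y\mapsto aZ^{-1}$, $Z^{-1}\mapsto\td a^{-1}Y$; the fact that $\Phi$ reverses order is natural here, since turning operator actions on functions into actions on sequences swaps left and right compositions.

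Granted this intertwining, the $Y$-eigenvalue equation $YE_n=y_n E_n$ (with $y_n$ from \eqref{25}) evaluated at $z=z_{a,q}(m)^{-1}$ becomes a scalar eigenvalue equation for $m\mapsto F(m,n)$ involving multiplication by $az_{\td a,q}(m)^{-1}$, matched against $y_n$. Using $\td a^2=q^{-1}abcd$, one checks directly that $y_n=a\,z_{\td a,q}(n)^{-1}$ (this is the algebraic identity encoding the duality \eqref{64}), so the equation on $m\mapsto F(m,n)$ is precisely the eigenvalue equation characterizing the dual non-symmetric AW polynomial evaluated at $z_{\td a,q}(n)^{-1}$. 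Since the dual eigenvalues $\td y_m$ are distinct under \eqref{eq:6}, the joint eigenfunctions are unique up to scalar, so $F(\cdot,n)=c_n\,F^\vee(n,\cdot)$ for some constant $c_n$; evaluating at $m=0$ with $F(0,n)=1=F^\vee(n,0)$ forces $c_n=1$, yielding \eqref{eq:15}.

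The main obstacle is the explicit verification of the intertwining: one must carefully compute how \eqref{34} and \eqref{15} transform under evaluation at $z_{a,q}(m)^{-1}$, check the cancellations that make the action preserve the spectral set, and match the resulting coefficients with those of the dual basic representation via the identities \eqref{64}. Bookkeeping for the piecewise definition \eqref{26} (the branches $m\ge 0$ and $m<0$), the sign ambiguity in $\td a=(q^{-1}abcd)^{1/2}$ (which by \eqref{63} and \eqref{125} leaves \eqref{eq:15} invariant but must be fixed consistently), and the anti-isomorphism nature of $\Phi$ (which swaps the roles of ``being a $Y$-eigenfunction'' on one side and ``being supported at a spectral point'' on the other) is the main place where errors can creep in.
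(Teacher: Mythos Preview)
Your approach is genuinely different from the paper's. The paper gives a direct computational proof: it substitutes the explicit formulas \eqref{40} expressing $E_{\pm n}$ in terms of the symmetric polynomials $R_n$ and $R_{n-1}$ (with shifted parameters), invokes the symmetric duality \eqref{66} and its shifted version $R_{n-1}[a^{-1}q^{-m};qa,qb,c,d]=R_{m-1}[\td a^{-1}q^{-n};q\td a,q\td b,\td c,\td d]$, and then checks three elementary algebraic identities among the dual parameters \eqref{64}. No DAHA machinery enters. Your approach, by contrast, is the representation-theoretic one (essentially the original argument of Noumi--Stokman cited in the theorem): use the anti-isomorphism $\Phi$ of \eqref{104} together with a characterization of the polynomial module and its eigenbasis. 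Each has its merits: the paper's proof is short and self-contained once \eqref{66} is available, while yours explains structurally \emph{why} the duality holds.

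That said, your write-up has a concrete slip and a real gap. The slip: the eigenvalue identity you state, $y_n=a\,z_{\td a,q}(n)^{-1}$, is incorrect; from \eqref{25} and \eqref{26} one has $y_n=\td a\,z_{\td a,q}(n)$ (cf.\ \eqref{80}). The gap is in the uniqueness step. What you actually obtain from the $Y$-eigenvalue equation and the intertwining is that $m\mapsto F(m,n)$ satisfies the four-term recurrence $\td M_m\, g(m)=z_{\td a,q}(n)\,g(m)$ (the dual of \eqref{101}); but this is an equation on \emph{all} sequences $g\colon\ZZ\to\mathbb{C}$, and by itself it does not determine $g$ up to scalar. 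Your appeal to ``distinct dual eigenvalues $\td y_m$'' pertains to $\td Y$-eigenfunctions inside the dual polynomial module, not to arbitrary sequences. To close the argument you must first identify the image of the evaluation map, as a module for the dual DAHA via $\Phi$, with the dual polynomial representation (e.g.\ by showing both are cyclic, generated by the constant function $1$, and invoking a uniqueness statement for the polynomial representation). That identification is the substantial content of the Noumi--Stokman proof; the ``explicit verification of the intertwining'' which you rightly flag as the main obstacle is necessary but not sufficient. Note also that in this paper the intertwining relations \eqref{105}--\eqref{107} and the recurrence \eqref{101} are \emph{derived from} Theorem~\ref{th:11}, so you cannot cite them without circularity.
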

\begin{proof}
For the case that $n$ or $m=0$ use that
$E_n[a^{-1};a,b,c,d\,|\,q]=1$.
For the other cases we have to show that
\begin{align}
E_{-n}\big[a^{-1}q^{-m};a,b,c,d\,|\,q\big]&=
E_m\big[\td a q^n;\td a,\td b,\td c,\td d\,|\,q\big],
\label{eq:7}\\
E_{-n}\big[aq^m;a,b,c,d\,|\,q\big]&=
E_{-m}\big[\td a q^n;\td a,\td b,\td c,\td d\,|\,q\big],
\label{eq:80}\\
E_n\big[a^{-1}q^{-m};a,b,c,d\,|\,q\big]&=
E_m\big[\td a^{-1} q^{-n};\td a,\td b,\td c,\td d\,|\,q\big].
\label{eq:9}
\end{align}
for $m,n\in\ZZ_{>0}$.
From \eqref{66} we see that
\begin{equation}
R_{n-1}[a^{-1}q^{-m};qa,qb,c,d\,|\,q]
=R_{m-1}\big[\td a^{-1} q^{-n};q\td a,q\td b,\td c,\td d\,|\,q\big].
\label{eq:60}
\end{equation}
By \eqref{40}, \eqref{66}, \eqref{eq:60} and by the identities
$ab=\td a\td b$, $ac=\td a\td c$, $ad=\td a\td d$ we see that
\eqref{eq:7}, \eqref{eq:80}, \eqref{eq:9} will respectively follow from the
identities
\begin{align*}
&q^{-n}(1-q^n ab)(1-q^{n-1}abcd)q^{-m}(1-q^m)(1-q^m ab^{-1})\\
&\qquad
=q^{-m}(1-q^m)(1-q^{m-1}\td c\td d)q^{-n}(1-q^n\td a^2)
(1-q^n\td a\td b),\sLP
&q^{-n}(1-q^n ab)(1-q^{n-1}abcd)a^{-1}b^{-1}q^{-m}(1-a^2q^m)(1-abq^m)\\
&\qquad
=q^{-m}(1-q^m \td a\td b)(1-q^{m-1}\td a\td b\td c\td d)\td a^{-1}
\td b^{-1}q^{-n}(1-\td a^2q^n)(1-\td a\td bq^n),\sLP
&q^{-n}(1-q^n)(1-q^{n-1}cd)a^2q^m(1-q^{-m})(1-a^{-1}b^{-1}q^{-m})\\
&\qquad
=q^{-m}(1-q^m)(1-q^{m-1}\td c\td d)\td a^2q^n(1-q^{-n})
(1-\td a^{-1}\td b^{-1}q^{-n})
\end{align*}
These indeed hold by \eqref{64}.
\end{proof}
\begin{remark}
In view of Remark \ref{13} our formula \eqref{eq:15} matches
with the formula
\[
E_{\ga_m}(x_n^{-1};\underline t;q)=E_{x_n}(\ga_m^{-1};\td{\underline t};q)
\]
in \cite[Theorem 10.7(i)]{NS} if we also take into account the formulas
for $\ga_m$ and  $x_m$ in \cite[\S\S3.4, 10.6]{NS}.
However note that the definition of $E_m$ in \cite{NS} involving
\cite[Proposition 5.10(i),(ii)]{NS}, i.e., AW polynomials with
parameters
$q^\half a, q^\half b, q^\half c, q^\half d$, does not allow direct
explicit verification of the duality formula \eqref{eq:15} because
AW polynomials depending on such parameters
are dually related to AW polynomials of parameters $qa,b,c,d$
by \eqref{66}.
\end{remark}
\subsection{Recurrence relation for the nonsymmetric AW
polynomials}
\label{108}
The duality \eqref{eq:15} for non-symmetric AW polynomials
$E_n$ ($n\in\ZZ$) can be applied to the eigenvalue equation \eqref{25}
in order to obtain a recurrence relation for the $E_n$.
First we consider $(YF)[z]$, given by \eqref{34}, at
$z=z_{a,q}(m)^{-1}$ ($m\in\ZZ$), with $z_{a,q}(m)$ given by \eqref{26}.
Note that
\begin{align*}
q z_{a,q}(m)^{-1}&=z_{a,q}(m-1)^{-1}\quad(m\ne0),\\
z_{a,q}(m)&=z_{a,q}(-m)^{-1}\qquad(m\ne0),\\
qz_{a,q}(m)&=z_{a,q}(-m-1)^{-1}.
\end{align*}
Note also that the terms in \eqref{34} with $f[qz]$ and with $f[z^{-1}]$
vanish if $z=a^{-1}=z_{a,q}(0)^{-1}$.
Thus we can specialize \eqref{34} as follows:
\begin{equation}
(Yf)\big[z_{a,q}(m)^{-1}\big]=
Af\big[\tfrac1{z_{a,q}(m)}\big]+Bf\big[\tfrac1{z_{a,q}(m-1)}\big]
+Cf\big[\tfrac1{z_{a,q}(-m)}\big]+Df\big[\tfrac1{z_{a,q}(-m-1)}\big]
\label{1000}
\end{equation}
with
\begin{align*}
A&=
\frac{\bigl(1+ab-(a+b)z_{a,q}(m)^{-1}\bigr)
\bigl((c+d)q-(cd+q)z_{a,q}(m)^{-1}\bigr)}
{qz_{a,q}(m)(1-z_{a,q}(m)^{-2})(q-z_{a,q}(m)^{-2})}\,,
\\
\noalign{\allowbreak}
B&=\frac{(1-az_{a,q}(m)^{-1})(1-bz_{a,q}(m)^{-1})(1-cz_{a,q}(m)^{-1})
(1-dz_{a,q}(m)^{-1})}
{(1-z_{a,q}(m)^{-2})(1-qz_{a,q}(m)^{-2})}\,,
\\
\noalign{\allowbreak}
C&=\frac{(1-az_{a,q}(m)^{-1})(1-b z_{a,q}(m)^{-1})
\bigl((c+d)qz_{a,q}(m)^{-1}-(cd+q)\bigr)}
{q(1-z_{a,q}(m)^{-2})(1-qz_{a,q}(m)^{-2})}\,,\\
\noalign{\allowbreak}
D&=\frac{(c-z_{a,q}(m)^{-1})(d-z_{a,q}(m)^{-1})
\bigl(1+ab-(a+b)z_{a,q}(m)^{-1}\bigr)}
{(1-z_{a,q}(m)^{-2})(q-z_{a,q}(m)^{-2})}\,,
\end{align*}
where the second and third term on the right in \eqref{1000}
vanish if $m=0$.
Now take $f=E_n$ and observe that the eigenvalue equation \eqref{25}
can be written in a unified way as
\begin{equation}
(YE_n)[z]=\td a\,z_{\td a,q}(n)\,E_n[z]\quad(n\in\ZZ)
\label{80}
\end{equation}
Thus, by the duality \eqref{eq:15} we can rewrite \eqref{80}
for $z=z_{a,q}(m)^{-1}$ as
\begin{multline}
\td a\,z_{\td a,q}(n)\,\td E_m\big[z_{\td a;q}(n)^{-1}\big]
=A\td E_m\big[z_{\td a;q}(n)^{-1}\big]
+BE_{m-1}\big[z_{\td a;q}(n)^{-1}\big]\\
+C\td E_{-m}\big[z_{\td a;q}(n)^{-1}\big]
+D\td E_{-m-1}\big[z_{\td a;q}(n)^{-1}\big],
\label{90}
\end{multline}
where
\begin{align*}
A&=\frac{\bigl(1+ab-(a+b)z_{a,q}(m)^{-1}\bigr)
\bigl((c+d)q-(cd+q)z_{a,q}(m)^{-1}\bigr)}
{qz_{a,q}(m)(1-z_{a,q}(m)^{-2})(q-z_{a,q}(m)^{-2})}\,,
\\
B&=\frac{(1-az_{a,q}(m)^{-1})(1-bz_{a,q}(m)^{-1})(1-cz_{a,q}(m)^{-1})
(1-dz_{a,q}(m)^{-1})}
{(1-z_{a,q}(m)^{-2})(1-qz_{a,q}(m)^{-2})}\,,
\\
C&=\frac{(1-az_{a,q}(m)^{-1})(1-b z_{a,q}(m)^{-1})
\big((c+d)qz_{a,q}(m)^{-1}-(cd+q)\big)}
{q(1-z_{a,q}(m)^{-2})(1-qz_{a,q}(m)^{-2})}\,,
\\
D&=\frac{(c-z_{a,q}(m)^{-1})(d-z_{a,q}(m)^{-1})
\bigl(1+ab-(a+b)z_{a,q}(m)^{-1}\bigr)}
{(1-z_{a,q}(m)^{-2})(q-z_{a,q}(m)^{-2})}\,,
\end{align*}
and where $\td E_m$ means $E_m$ for dual parameters.
Because we are dealing with Laurent polynomials, the equality \eqref{90}
will remain valid if we replace $z_{\td a;q}(n)^{-1}$ by arbitrary complex
$z$. Next, replace in \eqref{90} $a,b,c,d$ by $\td a,\td b,\td c,\td d$,
and replace $m$ by $n$. We obtain:
\begin{align*}
&az^{-1} E_n[z]\\
&=\frac{\bigl(1+\td a\td b-(\td a+\td b)z_{\td a,q}(n)^{-1}\bigr)
\bigl((\td c+\td d)q-(\td c\td d+q)z_{\td a,q}(n)^{-1}\bigr)}
{qz_{\td a,q}(n)(1-z_{\td a,q}(n)^{-2})(q-z_{\td a,q}(n)^{-2})}\,
E_n[z]\\
&\qquad
+\frac{(1-\td az_{\td a,q}(n)^{-1})(1-\td bz_{\td a,q}(n)^{-1})
(1-\td cz_{\td a,q}(n)^{-1})(1-\td dz_{\td a,q}(n)^{-1})}
{(1-z_{\td a,q}(n)^{-2})(1-qz_{\td a,q}(n)^{-2})}\,
E_{n-1}[z]\\\
&\qquad+\frac{(1-\td az_{\td a,q}(n)^{-1})(1-\td b z_{\td a,q}(n)^{-1})
\bigl((\td c+\td d)qz_{\td a,q}(n)^{-1}-(\td c\td d+q)\bigr)}
{q(1-z_{\td a,q}(n)^{-2})(1-qz_{\td a,q}(n)^{-2})}\,
E_{-n}[z]\\
&\qquad+\frac{(\td c-z_{\td a,q}(n)^{-1})(\td d-z_{\td a,q}(n)^{-1})
\bigl(1+\td a\td b-(\td a+\td b)z_{\td a,q}(n)^{-1}\bigr)}
{(1-z_{\td a,q}(n)^{-2})(q-z_{\td a,q}(n)^{-2})}\,
E_{-n-1}[z].
\end{align*}
Finally put
\begin{equation}
\nu_{\td a ,q}(n):=\td a^{-1} z_{\td a,q}(n)^{-1}=
\begin{cases}
(abcd)^{-1}q^{-n+1},&n\ge0,\\
q^{-n},&n<0.
\end{cases}
\label{103}
\end{equation}
Then we obtain the recurrence relation for the nonsymmetric
AW polynomials:
\begin{equation}
M_n\big(E_n[z]\big)=z^{-1} E_n[z],
\label{101}
\end{equation}
where $M_n$ is an operator acting on functions $g(n)$ of $n$
($n\in\ZZ$) which is given by
\begin{align}
&M_n\big(g(n)\big):=\nonu\\
&\frac{\nu_{\td a ,q}(n)(1+ab-ab(q^{-1}cd+1)\nu_{\td a ,q}(n))
(q(c+d)-cd(a+b)\nu_{\td a ,q}(n))}
{(q-abcd\nu_{\td a ,q}(n)^2)(q-q^{-1}abcd\nu_{\td a ,q}(n)^2)}\,
g(n)\nonu\\
&+\frac{(1-q^{-1}abcd\nu_{\td a ,q}(n))(1-ab\nu_{\td a ,q}(n))
(1-ac\nu_{\td a ,q}(n))(1-ad\nu_{\td a ,q}(n))}
{a(1-q^{-1}abcd\nu_{\td a ,q}(n)^2)(1-abcd\nu_{\td a  ,q}(n)^2)}\,
g(n-1)\nonu\\
&+\frac{(1-q^{-1}abcd\nu_{\td a ,q}(n))(1-ab\nu_{\td a ,q}(n))
(ab(c+d)\nu_{\td a ,q}(n)-(a+b))}
{ab(1-q^{-1}abcd\nu_{\td a ,q}(n)^2)(1-abcd\nu_{\td a  ,q}(n)^2)}\,
g(-n)\nonu\\
&+\frac{q^2(1-q^{-1}bc\nu_{\td a ,q}(n))
(1-q^{-1}bd\nu_{\td a ,q}(n))
(1+ab-ab(q^{-1}cd+1)\nu_{\td a ,q}(n))}
{b(q-abcd\nu_{\td a ,q}(n)^2)(q-q^{-1}abcd\nu_{\td a  ,q}(n)^2)}\,
g(-n-1).
\label{102}
\end{align}
\par
Note that by the symmetry \eqref{125} there also follows a recurrence
formula which expands $z E_n[z]$. We omit the explicit expression.
\begin{remark}
In \cite[\S10.9]{NS} it is just observed that
a recurrence relation for nonsymmetric AW polynomials can
be derived from the $Y$-eigenvalue equation by duality, but no further
derivation or explicit formula is given. Neither we have found such
a formula elsewhere in the literature.
\end{remark}
\subsection{The dual of the basic representation}
\label{137}
It follows from the derivation of \eqref{101}, \eqref{102}
in \S\ref{108} that
\begin{equation}
(Yf)[z_{a,q}(m)^{-1}]=
\td a\,\td M_m\big(f[z_{a,q}(m)^{-1}]\big),
\label{105}
\end{equation}
where, as usual, $\td M_m$ means the operator $M_m$ with respect to
dual parameters. Define a multiplication operator $N_m$, acting
on functions $g(m)$ ($m\in\ZZ$), by
\begin{equation}
N_m\big(g(m)\big):=\nu_{\td a ,q}(m)^{-1}\,g(m).
\label{136}
\end{equation}
Then, by \eqref{103},
\begin{equation}
(Z^{-1}f)[z_{a,q}(m)^{-1}]=a^{-1}\,\td N_m\big(f[z_{a,q}(m)^{-1}]\big).
\label{106}
\end{equation}
Also, in terms of $T_1$ acting on $f[z]$ by \eqref{15},
let $\FST_m$ be the operator acting on $g(m)$ such that
\begin{equation}
(T_1f)[z_{a,q}(m)^{-1}]=
\td \FST_m\big(f[z_{a,q}(m)^{-1}]\big).
\label{107}
\end{equation}
Since $T_1,Y,Z^{-1}$ acting on $f[z]$ satisfy the relations \eqref{75},
the same is true by \eqref{105}, \eqref{106}, \eqref{107}
for $\td \FST_m$, $\td a \td M_m$, $a^{-1}\td N_m$ acting on $g(m)$.
By the duality \eqref{104} $\td \FST_m,\td N_m,\td M_m$ satisfy
the relations \eqref{75} with respect to dual parameters and they
generate an anti-representation of $\DAHA$ with respect to dual
parameters. Hence we have also an anti-representation of
$\DAHA_{a,b,c,d;q}\lan T_1,Y,Z^{-1}\ran$ generated by
$T_1\to \FST_m$, $Y\to N_m$, $Z^{-1}\to M_m$.
\section{The basic representation
of the Askey-Wilson DAHA in a 2D realization}
\subsection{Definitions and explicit formulas}
We use results and notation from
\cite[(4.7) and following]{KB} except for a slight rescaling:
in \eqref{109} below we have an additional factor $a$ in the second
term on the right, which will also have impact on formulas further down.
This will facilitate the limit to Big $q$-Jacobi, which we will
consider later in the paper.
\par
The set-up is to associate with a Laurent polynomial $f$
a column vector $\vv{f}=\bma f_1\\f_2\ema$,
where $f_1,f_2$ are symmetric Laurent polynomials such that
\begin{equation}
f[z]=f_1[z]+az^{-1}(1-az)(1-bz)f_2[z].
\label{109}
\end{equation}
Then
\begin{equation}
\begin{split}
f_1[z]&=\frac{(z-a)(z-b)}{(ab-1)(1-z^2)}\,f[z]
-\,\frac{(1-az)(1-bz)}{(ab-1)(1-z^2)}\,f[z^{-1}],\\
f_2[z]&=\frac1{a(ab-1)}\,\frac{f[z]-f[z^{-1}]}{z-z^{-1}}\,.
\end{split}
\label{126}
\end{equation}
Put
\begin{equation}
\textstyle
\begin{split}
&\mathbf{S}:=\bma 1&\frac{a(1-az)(1-bz)}z\mLP
1&\frac{a(a-z)(b-z)}z\ema,\\
&\mathbf{S}^{-1}=
\frac1{(1-ab)(z-z^{-1})}
\bma \frac{(1-az)(1-bz)}z&
-\frac{(a-z)(b-z)}z\mLP-a^{-1}&a^{-1}\ema.
\end{split}
\label{127}
\end{equation}
Then \eqref{109} and \eqref{126} can be written more succinctly as
\begin{equation}
\bma f[z]\sLP f[z^{-1}]\ema=\mathbf{S}\,\vv{f}[z],\qquad
\vv{f}[z]=\mathbf{S}^{-1}\bma f[z]\sLP f[z^{-1}]\ema.
\label{129}
\end{equation}
\par
The non-symmetric AW polynomials $E_{\pm n}[z]$, as defined in
\eqref{40}, already have the decomposition \eqref{109}.
Thus they have vector-valued form
(see also \cite[(4.10), (4.11)]{KB})
\begin{equation}
\begin{split}
\vv{E_n}[z]&=
\bma R_n[z;a,b,c,d\mid q]\sLP
\displaystyle-\frac{\si(n)R_{n-1}[z;qa,qb,c,d\mid q]}
{(1-qab)(1-ab)(1-ac)(1-ad)}\ema\quad
(n=0,1,2,\ldots),\\
\vv{E_{-n}}[z]&=
\bma R_n[z;a,b,c,d\mid q]\sLP\displaystyle
 -\frac{\si(-n)R_{n-1}[z;qa,qb,c,d\,|\,q]}
{(1-qab)(1-ab)(1-ac)(1-ad)}\ema
\quad(n=1,2,\ldots),
\end{split}
\label{20}
\end{equation}
where
\begin{equation}
\begin{split}
\si(n)&:=q^{1-n}(1-q^n)(1-q^{n-1}cd)\qquad\qquad(n=0,1,2,\ldots),\\
\si(-n)&:=(ab)^{-1}q^{1-n}(1-q^n ab)(1-q^{n-1}abcd)\qquad(n=1,2,\ldots),
\end{split}
\label{124}
\end{equation}
and where $\si(n)R_{n-1}={\rm const.}\,(1-q^n)R_{n-1}:=0$ for $n=0$.
\par
Let $A$ be an operator acting on the space of Laurent polynomials.
Then we can write
\begin{equation}
(Af)[z]=
(A_{11}f_1+A_{12}f_2)[z]+az^{-1}(1-az)(1-bz)
(A_{21}f_1+A_{22}f_2)[z],
\label{123}
\end{equation}
where the $A_{i\,j}$ are operators acting on the space of symmetric
Laurent polynomials.
So we have the identifications
\begin{equation*}
f\leftrightarrow\bma f_1\\f_2\ema=\vv{f},\quad
A\leftrightarrow\bma A_{11}&A_{12}\\A_{21}&A_{22}\ema=\mathbf{A}.
\end{equation*}
In the identification $A\leftrightarrow \mathbf{A}$ composition of operators
corresponds to matrix multiplication together with entrywise composition
of operators. Indeed, we can express \eqref{123} as
$\vv{Af}=\mathbf{A}\vv{f}$. Then
\[
\vv{(AB)(f)}=\vv{A(Bf)}=\mathbf{A}\,\vv{Bf}=\mathbf{A}\mathbf{B}\vv{f}.
\]
\par
In this way $T_1$, given by \eqref{15},
acts as a $2\times 2$ matrix-valued operator:
\begin{equation}
\mathbf{T_1}=\begin{pmatrix}-ab&0\mLP 0&-1\end{pmatrix}.
\label{7}
\end{equation}
The very simple form of $\mathbf{T_1}$ as a diagonal matrix with
constant coefficients was the motivation for the decomposition
\eqref{109}.
\par
Next we describe the $2\times 2$ matrix-valued operator
\begin{equation}
\mathbf{Y}=\bma Y_{11}&Y_{12}\\Y_{21}&Y_{22}\ema.
\label{22}
\end{equation}
corresponding to $Y$ given by \eqref{34}.
Below we give the explicit expressions for the $Y_{ij}$,
see (4.12)--(4.15) in \cite{KB}.
The expressions  for $Y_{11}$ and $Y_{22}$ involve the
operator $L=L_{a,b,c,d;q}$  as given in \eqref{eq:8}:
\begin{align}
Y_{11}&=\frac{ab(q^{-1}cd+1)-ab L_{a,b,c,d;q}}{1-ab}\,,
\label{47}\\
Y_{22}&=\frac{-ab(q^{-1}cd+1)+q^{-1}L_{aq,bq,c,d;q}}{1-ab}\,,
\label{48}
\end{align}
We give $Y_{12}$ and $Y_{21}$ as operators
acting on a symmetric Laurent polynomial $g[z]$:
\begin{align}
&(Y_{21}g)[z]=
\frac{z(c-z)(d-z)}{a(1-ab)(1-z^2)(q-z^2)}\,\big(g[q^{-1}z]-g[z]\big)
\nonu\\
&\qquad\qquad\qquad
+\frac{z(1-cz)(1-dz)}{a(1-ab)(1-z^2)(1-qz^2)}\,\big(g[qz]-g[z]\big),
\label{49}\\
&(Y_{12}g)[z]=
\frac{a^2b(a-z)(b-z)(1-az)(1-bz)}{(1-ab)z(q-z^2)(1-qz^2)}\times\big((cd+q)(1+z^2)-(1+q)(c+d)z\big)g[z]
\nonu\\
\noalign{\allowbreak}
&\qquad\qquad\qquad
-\frac{a^2b(a-z)(b-z)(c-z)(d-z)(aq-z)(bq-z)}{q(1-ab)z(1-z^2)(q-z^2)}\,
g[q^{-1}z]
\nonu\\
\noalign{\allowbreak}
&\qquad\qquad\qquad
-\frac{a^2b(1-az)(1-bz)(1-cz)(1-dz)(1-aqz)(1-bqz)}
{q(1-ab)z(1-z^2)(1-qz^2)}\,g[qz].
\label{50}
\end{align}
Note an error in the formula \cite[(4.14)]{KB} for $(Y_{21}g)[z]$,
which we have corrected above: In the first term on the right we have
replaced the denominator factor $1-qz^2$ by $q-z^2$.
\par
Then the eigenvalue equation \eqref{25}, with $Y$ in
matrix-valued form and $E_{\pm n}$ in vector-valued form as given above,
still holds:
\begin{equation}
\begin{split}
\mathbf{Y}\vv{E_n}&=q^{n-1}abcd\,\vv{E_n}
\qquad\;(n=0,1,2,\ldots),\\
\mathbf{Y} \vv{E_{-n}}&=q^{-n}\,\vv{E_{-n}}
\qquad\qquad(n=1,2,\ldots).
\end{split}
\label{21}
\end{equation}

{By \cite[(3.7), (3.6)]{K} we can express $Y^{-1}$
in terms of $Y$ and $T_1^{-1}$. Indeed,
\begin{align}
Y^{-1}&=T_0^{-1}T_1^{-1}
=-qc^{-1}d^{-1}T_0T_1^{-1}-(1+qc^{-1}d^{-1})T_1^{-1}
\nonu\\
&=-qc^{-1}d^{-1}T_1^{-1}YT_1^{-1}-(1+qc^{-1}d^{-1})T_1^{-1}.
\label{46}
\end{align}
Then the matrix realization of $Y^{-1}$ follows from \eqref{46},
\eqref{47}--\eqref{50} and \eqref{7}.}
\par
As a final example the multiplication operator $Z$, given by \eqref{14},
corresponds to a $2\times2$ matrix-valued operator $\mathbf{Z}$ with
matrix entries acting as multiplication operators:
\begin{equation}
\mathbf{Z}=\frac1{ab-1}
\bma a+b-z-z^{-1}&-a(1-az)(1-az^{-1})(1-bz)(1-bz^{-1})\mLP
a^{-1}&ab(z+z^{-1})-(a+b)\ema.
\label{5}
\end{equation}
Note that $\det(\mathbf{Z})=1$. Hence
\begin{equation}
\mathbf{Z}^{-1}=\frac1{ab-1}
\bma ab(z+z^{-1})-(a+b)&a(1-az)(1-\frac{a}{z})(1-bz)(1-\frac{b}{z})\mLP
-a^{-1}&a+b-z-z^{-1}\ema.
\label{6}
\end{equation}
$\mathbf{Z}^{-1}$ can be diagonalized by
\begin{equation}
\mathbf{Z}^{-1}=\mathbf{S}^{-1}\bma z^{-1}&0\\0&z\ema\mathbf{S},
\label{128}
\end{equation}
where $\mathbf{S}$ and $\mathbf{S}^{-1}$ are given in \eqref{127}.
\par
Now consider \eqref{129} for $f:=E_n$:
\begin{equation*}
\bma E_n[z]\sLP E_n[z^{-1}]\ema=\mathbf{S}\,\vv{E_n}[z],\qquad
\vv{E_n}[z]=\mathbf{S}^{-1}\bma E_n[z]\sLP E_n[z^{-1}]\ema.
\end{equation*}
On combination with \eqref{128} and \eqref{101} this shows that
$\vv{E_n}[z]$ satisfies a similar recurrence relation as $E_n[z]$, namely
\begin{equation}
M_n\Big(\vv{E_n}[z]\Big)=\mathbf{Z}^{-1} \vv{E_n}[z],
\label{134}
\end{equation}
where the operator $M_n$, acting on functions of $n$, is given by
\eqref{102}.
\subsection{Orthogonality and equivalence of representations}
In addition to the conditions on $a,b,c,d,q$ at the end of \S\ref{133}
assume that $a,b$ are real and $ab<0$. Put
\begin{equation*}
w_{a,b,c,d;q}[z]:=
\frac{(q,ab,ac,ad,bc,bd,cd;q)_\iy}{4\pi (abcd;q)_\iy}
\left|\frac{(z^2;q)_\iy}{(az,bz,cz,dz;q)_\iy}\right|^2
\end{equation*}
for the weight function in the orthogonality relations \eqref{131}
for the AW polynomials,
and put
\begin{equation*}
\mathbf{W[z]}:=\bma w_{a,b,c,d;q}[z]&0\\0&
C_{a,b,c,d;q}w_{qa,qb,c,d;q}[z]\ema,
\end{equation*}
where
\begin{equation*}
C_{a,b,c,d;q}:=-a^3b\,\frac{(1-ab)(1-qab)(1-ac)(1-ad)(1-bc)(1-bd)}
{(1-abcd)(1-qabcd)}>0.
\end{equation*}
Then, by \cite[\S5]{KB}, we have the following orthogonality
relations with respect to a positive inner product:
\begin{equation}
\int_{|z|=1} \Big(\vv{E_m}[z]\Big)^{\rm t}\,\mathbf{W}[z]\,\vv{E_n}[z]\,
\frac{dz}{iz}=h_n\,\de_{m,n}\qquad(m,n\in\ZZ)
\label{138}
\end{equation}
for certain $h_n>0$. Here $\vv{v}^{\,\rm t}$ means the column vector
$\vv{v}$ written as a row vector, and more generally
$\mathbf{A}^{\rm t}$ means the transpose of a matrix $\mathbf{A}$.
\par
Now observe that
\begin{align*}
&(ab -1)\mathbf{W}[z]^{-1}
\big(\mathbf{T}_1(\mathbf{Z}^{-1})^{\rm t}\,\mathbf{T}_1^{-1}\big)
\mathbf{W}[z]\\
&\quad=\,\mathbf{W}[z]^{-1}
\bma ab(z+z^{-1})-(a+b)&-(a^2b)^{-1}\sLP
a^2b(1-az)(1-
\frac{a}{z})(1-bz)(1-\frac{b}{z})&a+b-z-\frac{1}{z}\ema \mathbf{W}[z]\\
&\quad=
\bma ab(z+z^{-1})-(a+b)&a(1-az)(1-\frac{a}{z})(1-bz)(1-\frac{b}{z})\mLP
-a^{-1}&a+b-z-z^{-1}\ema=(ab -1)\mathbf{Z}^{-1},
\end{align*}
since
\begin{equation*}
C_{a,b,c,d;q}\,\frac{w_{qa,qb,c,d;q}[z]}{w_{a,b,c,d;q}[z]}
=-a^3b(1-az)(1-az^{-1})(1-bz)(1-bz^{-1}).
\end{equation*}
Hence, for 2-vector valued Laurent polynomials $\vv{f}[z],\vv{g}[z]$
we have
\begin{equation}
\int_{|z|=1}
\Big(\mathbf{T}_1^{-1}\mathbf{Z}^{-1}\mathbf{T}_1\vv{f}[z]\Big)^{\rm t}
\mathbf{W}[z]\,\vv{g}[z]\,\frac{dz}{iz}
=\int_{|z|=1}\big(\vv{f}[z]\big)^{\rm t}\,\mathbf{W}[z]\,
\mathbf{Z}^{-1} \vv{g}[z]\,\frac{dz}{iz}.
\label{135}
\end{equation}
\par
Now we can show, analogous to \eqref{39} and following, that
we can use $\vv{E}_n[z]$ in order to pass from the basic representation
of $\DAHA$ to an anti-representation of $\DAHA$ acting on
scalar-valued functions of $n$. Define a Fourier-type transform
\begin{equation*}
\big(\vv{f}\big){}\hat{\vphantom{1}}\,(n):=
\int_{|z|=1} \big(\vv{f}[z]\big)^{\rm t}\,\mathbf{W}[z]\,
\vv{E_n}[z]\,\frac{dz}{iz}\qquad(n\in\ZZ).
\end{equation*}
Then, by \eqref{134} and \eqref{135},
\begin{align*}
M_n\Big(\big(\vv{f}\big){}\hat{\vphantom{1}}\,(n)\Big)
=\int_{|z|=1} \big(\vv{f}[z]\big)^{\rm t}\,\mathbf{W}[z]\,
M_n\big(\vv{E_n}[z]\big)\,\frac{dz}{iz}
=\int_{|z|=1} \big(\vv{f}[z]\big)^{\rm t}\,\mathbf{W}[z]\,
\mathbf{Z}^{-1}\vv{E_n}[z]\,\frac{dz}{iz}\\
=\int_{|z|=1}
\Big(\mathbf{T}_1^{-1}\mathbf{Z}^{-1}\mathbf{T}_1\vv{f}[z]\Big)^{\rm t}
\mathbf{W}[z]\,\vv{E_n}[z]\,\frac{dz}{iz}
=\Big(\mathbf{T}_1^{-1}\mathbf{Z}^{-1}\mathbf{T}_1\vv{f}\Big)
{}\hat{\vphantom{1}}\,(n).
\end{align*}
\par
Furthermore, by \eqref{103}, \eqref{136} and \eqref{21}
we have
\begin{align*}
N_n\Big(\big(\vv{f}\big){}\hat{\vphantom{1}}\,(n)\Big)
=\int_{|z|=1} \big(\vv{f}[z]\big)^{\rm t}\,\mathbf{W}[z]\,
N_n\big(\vv{E_n}[z]\big)\,\frac{dz}{iz}
=\int_{|z|=1} \big(\vv{f}[z]\big)^{\rm t}\,\mathbf{W}[z]\,
\big(\mathbf{Y}\vv{E_n}\big)[z]\,\frac{dz}{iz}\\
=\int_{|z|=1} \big(\mathbf{Y}\vv{f}[z]\big)^{\rm t}\,\mathbf{W}[z]\,
\vv{E_n}[z]\frac{dz}{iz}
=\Big(\mathbf{Y}\vv{f}\Big){}\hat{\vphantom{1}}\,(n).
\end{align*}

Finally define an operator $U_n$ acting on functions of $n$
which extends the action of $\mathbf{T}$ on~$\vv{E}_{\pm n}$.
This operator $U$ can easily be given explicitly by using 
\eqref{20} and \eqref{7}. Then
\begin{equation*}
U_n\Big(\big(\vv{f}\big){}\hat{\vphantom{1}}\,(n)\Big)
=\Big(\mathbf{T}\vv{f}\Big){}\hat{\vphantom{1}}\,(n).
\end{equation*}
So, in view of \eqref{132} we have settled that
\begin{equation*}
\DAHA_{a,b,c,d;q}\lan U_n,N_n,M_n\ran\simeq
\DAHA_{a,b,c,d;q}\lan T_1,Y,T_1^{-1}Z^{-1}T_1\ran
\simeq\DAHA_{a,b,c,d;q}^{\rm opp}\lan T_1,Y,Z^{-1}\ran.
\end{equation*}
We have achieved this by working with 2-vector-valued functions of $z$
and without using the DAHA duality, an approach quite different from
the one in \S\ref{137}.
\begin{remark}
Define the $2\times 2$ matrix valued polynomial
\begin{equation*}
\mathbf{E}_n[z]:=\bma \vv{E_n}[z]&\vv{E_{-n}}[z]\ema\quad
(n=0,1,2,\ldots)
\end{equation*}
where $\vv{E_{\pm n}}[z]$ are the 2-vector valued polynomials
given by \eqref{20}. Then, by \eqref{138},
\begin{equation*}
\int_{|z|=1} \Big(\mathbf{E}_m[z]\Big)^{\rm t}\,\mathbf{W}[z]\,
\mathbf{E_n}[z]\,
\frac{dz}{iz}=h_n\,\de_{m,n}\bma1&0\\0&1\ema\qquad(m,n=0,1,2,\ldots).
\end{equation*}
Thus we have matrix-valued orthogonal polynomials, see for instance
\cite{DPS}.
\end{remark}
\subsection{Duality for the 2D non-symmetric AW polynomials}
While many formulas turn out to be very satisfactory in the
2D presentation, this is less so for the
2D version of the duality \eqref{eq:15} for non-symmetric AW
polynomials. Here we will give a ``mixed'' duality formula,
with scalar-valued polynomials on the left side and 2-vector-valued
polynomials on the right side, since this is most suitable 
when taking limits to Continuous Dual Hahn and
Big $q$-Jacobi.
We will compare $E_n\big[z_{a,q}(m)^{-1}\big]$
and $\vv{\td E_m}\big[z_{\td a,q}(n)^{-1}\big]$ ($m,n\in\ZZ$),
where $z_{a,q}(n)$ is given by \eqref{26}, $E_n$ by \eqref{40} and
$\vv{E_n}$ by \eqref{20},
and where, as usual, $\vv{\td E_m}$ means $\vv{E_m}$ with respect to
dual parameters.
Now it follows from \eqref{eq:15}, \eqref{109} and \eqref{103}
that we have the duality
\begin{equation}
E_n[z_{a,q}(m)^{-1}]
=\bma1&\displaystyle
\frac{(1-q^{-1}abcd\,\nu_{\td a,q}(n))(1-ab\,\nu_{\td a,q}(n))}
{\nu_{\td a,q}(n)}\ema\vv{\td E_m}[z_{\td a,q}(n)^{-1}].
\label{140}
\end{equation}
On the right-hand side we have a matrix product of a row vector and
a column vector, which yields a scalar.
\section{Degenerations of the Askey-Wilson DAHA and the\\
non-symmetric AW polynomials}
\subsection{Degenerations of DAHA and duality}
\label{155}
The limits $d\to0$ from AW to Continuous dual Hahn and $d,c\to0$
from AW to Al-Salam-Chihara (see \eqref{67} and \eqref{eq:ASC})
have corresponding  DAHA degenerations which were discussed in \cite{M}.
Here we recall the main points of that study and construct other degenerations corresponding to the limits \eqref{68} and
\eqref{28} from AW to Big and Little $q$-Jacobi.
We will give the degenerations for the DAHA presentations
\eqref{110} and \eqref{75}.
Because certain rescalings have to be emphasized, we now use
the DAHA notation
$\DAHA_{a,b,c,d;q}[T_1,T_0,T_0^{-1},Z,Z^{-1}]$
in connection with \eqref{110} and
$\DAHA_{a,b,c,d;q}\lan T_1,Y,Y^{-1},Z,Z^{-1}\ran$
in connection with \eqref{75}.
\paragraph{\bf From AW to Continuous Dual Hahn}
Just as in \eqref{67} and \eqref{24} we have to take the limit as $d\to0$.
However, before taking the limit
it is convenient to introduce a rescaling $T_0':=q^{-1}cdT_0^{-1}$
in \eqref{110} and a rescaling $Y':=q^{-1}cdY^{-1}$ in \eqref{75} before
letting $d\to0$. Then \eqref{110} can be equivalently written as
\begin{equation*}
\begin{split}
(T_1+ab)(T_1+1)=0,\\
T_0+T_0'+1+q^{-1}cd=0,\\
(aZ^{-1}T_1^{-1}+1)(bZ^{-1}T_1^{-1}+1)=0,\\
qZ^{-1}T_0+T_0'Z+(c+d)=0,\\
T_0T_0'=q^{-1}cd=T_0'T_0,\quad ZZ^{-1}=Z^{-1}Z=1.
\end{split}
\end{equation*}
In the limit for $d\to0$ we get the algebra
\begin{equation*}
\DAHA_{a,b,c;q}^{\rm CDqH}[T_1,T_0,T_0',Z,Z^{-1}]:=
\lim_{d\to0}\DAHA_{a,b,c,d;q}
[T_1,T_0,qc^{-1}d^{-1}T_0',Z,Z^{-1}]
\end{equation*}
with generators $T_1,T_0,T_0',Z,Z^{-1}$ and relations
\begin{align*}
(T_1+ab)(T_1+1)=0,\\
\noalign{\allowbreak}
T_0+T_0'+1=0,\\
\noalign{\allowbreak}
(aZ^{-1}T_1^{-1}+1)(bZ^{-1}T_1^{-1}+1)=0,\\
\noalign{\allowbreak}
qZ^{-1}T_0+T_0'Z+c=0,\\
\noalign{\allowbreak}
T_0T_0'=0=T_0'T_0,\quad ZZ^{-1}=Z^{-1}Z=1.
\end{align*}
Since $T_0'=-T_0-1$ by the second relation, this may be
substituted in the other relations in \eqref{75d} which involve $T_0'$,
after which $T_0'$ can be dropped as a generator.
The resulting relations are
\begin{equation}
\begin{split}
(T_1+ab)(T_1+1)=0,\\
(aZ^{-1}T_1^{-1}+1)(bZ^{-1}T_1^{-1}+1)=0,\\
qZ^{-1}T_0-T_0Z-Z+c=0,\\
T_0(T_0+1)=0,\quad ZZ^{-1}=Z^{-1}Z=1.
\end{split}
\label{75d}
\end{equation}
The presentation \eqref{75d} is the same as for the algebra
$\FSH_V$ in \cite[(1.6)--(1.10)]{M}.
\par
With $Y':=q^{-1}cdY^{-1}$ the relations
\eqref{75} can be equivalently written as
\begin{equation*}
\begin{split}
(T_1+ab)(T_1+1)=0,\\
T_1^{-1}Y+Y'T_1+1+q^{-1}cd=0,\\
(aZ^{-1}T_1^{-1}+1)(bZ^{-1}T_1^{-1}+1)=0,\\
qZ^{-1}T_1^{-1}Y+Y'T_1Z+c+d=0,\\
YY'=q^{-1}cd=Y'Y,\quad ZZ^{-1}=1=Z^{-1}Z.\\
\end{split}
\end{equation*}
In the limit for $d\to0$ we get the algebra
\begin{equation*}
\DAHA_{a,b,c;q}^{\rm CDqH}\lan T_1,Y,Y',Z,Z^{-1}\ran:=
\lim_{d\to0}\DAHA_{a,b,c,d;q}
\lan T_1,Y,qc^{-1}d^{-1}Y',Z,Z^{-1}\ran
\end{equation*}
with generators $T_1,Y,Y',Z,Z^{-1}$ and relations
\begin{equation}
\begin{split}
(T_1+ab)(T_1+1)=0,\\
T_1^{-1}Y+Y'T_1+1=0,\\
(aZ^{-1}T_1^{-1}+1)(bZ^{-1}T_1^{-1}+1)=0,\\
qZ^{-1}T_1^{-1}Y+Y'T_1Z+c=0,\\
YY'=0=Y'Y,\quad ZZ^{-1}=1=Z^{-1}Z.
\label{112}
\end{split}
\end{equation}
If we replace in \eqref{112} $Z,Z^{-1}$ by $X,X^{-1}$ and
$Y'$ by $-Z$ then we recover the relations given in
\cite[Proof of Lemma 2.3]{M3} for the algebra $\FSH_V$.
\par
Similarly as for \eqref{75d} we may rewrite the second relation
in \eqref{112} as $Y':=-T_1^{-1}YT_1^{-1}-T_1^{-1}$,
substitute this in the other relations in \eqref{112}
which involve $Y'$, and then remove $Y'$ as a generator:
\begin{equation}
\begin{split}
(T_1+ab)(T_1+1)=0,\\
T_1^{-1}Y+Y'T_1+1=0,\\
(aZ^{-1}T_1^{-1}+1)(bZ^{-1}T_1^{-1}+1)=0,\\
qZ^{-1}T_1^{-1}Y-T_1^{-1}YZ-Z+c=0,\\
YT_1^{-1}Y+c=0,\quad ZZ^{-1}=1=Z^{-1}Z.
\end{split}
\label{151}
\end{equation}
\paragraph{\bf From AW to Big \boldmath$q$-Jacobi}
Just as \eqref{27} we have to
replace the substitution \eqref{68} 
and then take the limit for $\la\to0$.
Because of the way $z$ transforms in \eqref{68},
before taking the limit it is necessary to rescale
$X:=\la Z$, $X':=\la Z^{-1}$ in \eqref{110} and \eqref{75}.
After these substitutions relations \eqref{110} can be equivalently written
as
\begin{equation*}
\begin{split}
(T_1+q a)(T_1+1)=0,\\
(T_0+ b)(T_0+1)=0,\\
T_1X+q a X'T_1^{-1}+(\lambda^2+q a)=0,\\
b T_0^{-1}X+q X'T_0+q c+b/c\lambda=0,\\
XX'=\la^2=X'X.
\end{split}
\end{equation*}
In the limit for $\la\to0$ we get the algebra
\begin{equation*}
\DAHA_{a,b,c;q}^{\rm BqJ}[T_1,T_0,T_0^{-1},X,X']:=
\lim_{\la\to0}\DAHA_{\la,q a\la^{-1}, q c\la^{-1}, b c^{-1}\la;q}
[T_1,T_0,T_0^{-1},\la^{-1}X,\la^{-1}X']
\end{equation*}
with generators $T_1,T_0,X,X'$ and relations
\begin{equation}
\begin{split}
(T_1+q a)(T_1+1)=0,\\
(T_0+ b)(T_0+1)=0,\\
T_1X+q a X'T_1^{-1} +q a=0,\\
b T_0^{-1}X+q X'T_0+q c=0,\\
XX'=0=X'X.
\end{split}
\label{Hgamma}
\end{equation}
This algebra can be seen to be equivalent with the algebra
$\FSH_V^\gamma$ in \cite[(3.110)--(3.114)]{M} if we
replace there $a,b,c$ by $-a^2/q,b c/q, c q$.
\par
With  $Z:=\la^{-1}X$ and $Z^{-1}:=\la^{-1}X'$ the relations
\eqref{75} can be equivalently written as
\begin{equation*}
\begin{split}
(T_1+qa)(T_1+1)=0,\\
(T_1^{-1}Y+b)(T_1^{-1}Y+1)=0,\\
T_1X+q a X'T_1^{-1}+(\lambda^2+q a)=0,\\
b Y^{-1}T_1 X+q X'T_1^{-1}Y+q c+b/c\lambda=0,\\
XX'=\la^2=X'X.
\end{split}
\end{equation*}
In the limit for $\la\to0$ we get the algebra
\[
\DAHA_{a,b,c;q}^{\rm BqJ}\lan T_1,Y,Y^{-1},X,X'\ran=\lim_{\la\to0}
\DAHA_{\la,q a\la^{-1}, q c\la^{-1}, b c^{-1}\la;q}\lan
T_1,Y,Y^{-1},\la^{-1}X,\la^{-1}X'\ran
\]
with generators $T_1,Y,X,X'$ and with relations
\begin{equation}
\begin{split}
(T_1+qa)(T_1+1)=0,\\
(T_1^{-1}Y+b)(T_1^{-1}Y+1)=0,\\
T_1X+q a X'T_1^{-1}+q a=0,\\
b Y^{-1}T_1 X+q X'T_1^{-1}Y+q c=0,\\
XX'=0=X'X.
\end{split}
\label{113}
\end{equation}
\paragraph{\bf Duality}
Now recall the anti-isometric
dualities \eqref{115} and \eqref{104}
which read in extended notation as
\begin{equation}
\DAHA_{a,b,c,d;q}[T_1,T_0,T_0^{-1},Z,Z^{-1}]\simeq
\DAHA_{\td a,\td b,\td c,\td d;q}
[T_1,aZ^{-1}T_1^{-1},a^{-1}T_1Z,\td a T_0^{-1}T_1^{-1},
\td a^{-1}T_1T_0]
\label{116}
\end{equation}
and
\begin{equation}
\DAHA_{a,b,c,d;q}\lan T_1,Y,Y^{-1},Z,Z^{-1}\ran\simeq
\DAHA_{\td a,\td b,\td c,\td d;q}\lan T_1,aZ^{-1},a^{-1}Z,\td a Y^{-1},
\td a^{-1} Y\ran.
\label{117}
\end{equation}
\par
In \eqref{116} substitute \eqref{81} and $T_0^{-1}=ab\la^{-2}T_0'$:
\begin{multline*}
\DAHA_{a,b,c,qa^{-1}b^{-1}c^{-1}\la^2;q}[T_1,T_0,ab\la^{-2}T_0',Z,Z^{-1}]
\simeq\\
\DAHA_{\la,ab\la^{-1},ac\la^{-1},qb^{-1}c^{-1}\la;q}
[T_1,aZ^{-1}T_1^{-1},a^{-1}T_1Z,ab\la^{-1}T_0'T_1^{-1},\la^{-1}T_1T_0]
\end{multline*}
In the limit for $\la\to0$ we get
\begin{equation}
\DAHA_{a,b,c;q}^{\rm CDqH}[T_1,T_0,T_0',Z,Z^{-1}]\simeq
\DAHA_{q^{-1}ab ,ab^{-1}, q^{-1}ac;q}^{\rm BqJ}
[T_1,aZ^{-1}T_1^{-1},a^{-1}T_1Z,abT_0'T_1^{-1},T_1T_0]
\label{119}
\end{equation}
or equivalently
\begin{multline}
\DAHA_{a,b,c;q}^{\rm CDqH}[T_1,T_1^{-1}X',a^{-1}b^{-1}XT_1,
aT_1^{-1}T_0^{-1},a^{-1}T_0T_1]\simeq\\
\DAHA_{q^{-1}ab ,ab^{-1}, q^{-1}ac;q}^{\rm BqJ}[T_1,T_0,T_0^{-1},X,X'].
\label{120}
\end{multline}
These anti-isometric
dualities can also be verified directly by comparing
\eqref{75d} and \eqref{Hgamma}.
They were observed in \cite[(3.115)]{M} as an isometry from
$\FSH_V$ to $\FSH_V^\gamma$.
\par
In \eqref{117} substitute \eqref{81} and $Y^{-1}=ab\la^{-2}Y'$:
\begin{multline*}
\DAHA_{a,b,c,qa^{-1}b^{-1}c^{-1}\la^2;q}\lan T_1,Y,
ab\la^{-2}Y',Z,Z^{-1}\ran
\simeq\\
\DAHA_{\la,ab\la^{-1},ac\la^{-1},qb^{-1}c^{-1}\la;q}\lan T_1,aZ^{-1},a^{-1}Z,
ab\la^{-1} Y',\la^{-1} Y\ran.
\end{multline*}
In the limit for $\la\to0$ we get
\begin{equation}
\DAHA_{a,b,c;q}^{\rm CDqH}\lan T_1,Y,Y',Z,Z^{-1}\ran\simeq
\DAHA_{q^{-1}ab ,ab^{-1}, q^{-1}ac;q}^{\rm BqJ}
\lan T_1,aZ^{-1},a^{-1}Z,abY',Y\ran
\label{121}
\end{equation}
or equivalently
\begin{equation}
\DAHA_{a,b,c;q}^{\rm CDqH}\lan T_1,X',a^{-1}b^{-1}X,aY^{-1},
a^{-1}Y\ran\simeq
\DAHA_{q^{-1}ab ,ab^{-1}, q^{-1}ac;q}^{\rm BqJ}\lan T_1,Y,Y^{-1},X,X'\ran.
\label{122}
\end{equation}
These anti-isometric
dualities can also be verified directly by comparing
\eqref{112} and \eqref{113}.
\paragraph{\bf From Continuous Dual \boldmath$q$-Hahn to
Al-Salam-Chihara and from Big to Little \boldmath$q$-Jacobi}
Just as in \eqref{eq:ASC}, \eqref{28} and \S\ref{118} we have to let
$c\to0$ in order to arrive from Continuous Dual $q$-Hahn to
Al-Salam-Chihara and from Big $q$-Jacobi to Little $q$-Jacobi:
\begin{align*}
\DAHA_{a,b;q}^{\rm ASC}[T_1,T_0,Z,Z^{-1}]&:=
\lim_{c\to0}\DAHA_{a,b,c;q}^{\rm CDqH}[T_1,T_0,T_0',Z,Z^{-1}],\\
\DAHA_{a,b;q}^{\rm ASC}\lan T_1,Y,Y',Z,Z^{-1}\ran&:=
\lim_{c\to0}\DAHA_{a,b,c;q}^{\rm CDqH}\lan T_1,Y,Y',Z,Z^{-1}\ran,\\
\DAHA_{a,b;q}^{\rm LqJ}[T_1,T_0,T_0^{-1},X,X']&:=
\lim_{c\to0}\DAHA_{a,b,c;q}^{\rm BqJ}[T_1,T_0,T_0^{-1},X,X'],\\
\DAHA_{a,b;q}^{\rm LqJ}\lan T_1,Y,Y^{-1},X,X'\ran&:=
\lim_{c\to0}\DAHA_{a,b,c;q}^{\rm BqJ}\lan T_1,Y,Y^{-1},X,X'\ran.
\end{align*}
The corresponding formulas for the relations and for the dualities
can be obtained by putting $c=0$ in   
\eqref{75d}--\eqref{113} and
\eqref{119}--\eqref{122}.
The algebra $\DAHA_{a,b;q}^{\rm ASC}[T_1,T_0,Z,Z^{-1}]$
equals the algebra $\FSH_{III}^\be$ in \cite[Remark 6.17]{M}.
\paragraph{\bf From Continuous Dual \boldmath$q$-Hahn to
AW $q$-Bessel and from Big to Little $q$-Jacobi with $c\to\iy$}
Corresponding to the limit \eqref{149} we should let $c\to\iy$ in
\eqref{75d}. 
However, in order to get meaningful limit relations we first have to
rescale $T_0=c\td T_0$.
Then we obtain
\begin{equation*}
\DAHA_{a,b;q}^{\rm AWqB}[T_1,\td T_0,,Z,Z^{-1}]:=
\lim_{c\to\iy}\DAHA_{a,b,c;q}^{\rm CDqH}
[T_1,c\td T_0,-c\td T_0-1,Z,Z^{-1}].
\end{equation*}
with relations
\begin{equation}
\begin{split}
(T_1+ab)(T_1+1)=0,\\
(aZ^{-1}T_1^{-1}+1)(bZ^{-1}T_1^{-1}+1)=0,\\
qZ^{-1}\td T_0-\td T_0Z+1=0,\\
\td T_0^2=0,\quad ZZ^{-1}=Z^{-1}Z=1.
\end{split}
\label{150}
\end{equation}
If we compare the relations \eqref{150} with the relations \eqref{75d}
for $c=0$ then we see that they are equivalent under the substitution
$T_0=-q\td T_0Z^{-1}-1$. Thus
\begin{equation*}
\DAHA_{a,b;q}^{\rm AWqB}[T_1,\td T_0,Z,Z^{-1}]\simeq
\DAHA_{a,b;q}^{\rm ASC}[T_1,-q\td T_0Z^{-1}-1,Z,Z^{-1}].
\end{equation*}
Similar results can be formulated in connection with the limit
for $c\to\iy$ of relations \eqref{151}.
The algebra $\DAHA_{a,b;q}^{\rm AWqB}[T_1,\td T_0,Z,Z^{-1}]$
equals the algebra $\FSH_{III}$ in
\cite[(1.16)--(1.20)]{M} and \cite[(1.5)--(1.8)]{M3}.
It can be recognized as a so-called nil-DAHA \cite[Remark 8.4]{LT}.
The above correspondence between $\DAHA^{\rm ASC}$ and
$\DAHA^{\rm AWqB}$ was earlier given in
\cite[Remark 6.17]{M}.

Corresponding to the limit  \eqref{eq:l1} we should let
$c\to\iy$ in \eqref{Hgamma} and \eqref{113}.
However, in order to get meaningful limit relations we first have to rescale
$X=c\td X$, $X'=c\td X'$. Then we obtain
\begin{align*}
\wt\DAHA_{a,b;q}^{\rm LqJ}[T_1,T_0,T_0^{-1},\td X,\td X']&:=
\lim_{c\to\iy}\DAHA_{a,b,c;q}^{\rm BqJ}[T_1,T_0,T_0^{-1},c\td X,c\td X'],\\
\wt\DAHA_{a,b;q}^{\rm LqJ}\lan T_1,Y,Y^{-1},\td X,\td X'\ran&:=
\lim_{c\to\iy}\DAHA_{a,b,c;q}^{\rm BqJ}\lan T_1,Y,Y^{-1},c\td X,c\td X'\ran
\end{align*}
with relations, respectively,
\begin{equation}
\begin{split}
(T_1+ab)(T_1+1)=0,\\
(T_0+ab^{-1})(T_0+1)=0,\\
T_1\td X+ab\td X'T_1^{-1}=0,\\
ab^{-1}T_0^{-1}\td X+q\td X'T_0+a=0,\\
\td X\td X'=0=\td X'\td X,
\end{split}
\label{146}
\end{equation}
and
\begin{equation}
\begin{split}
(T_1+qa)(T_1+1)=0,\\
(T_1^{-1}Y+ab^{-1})(T_1^{-1}Y+1)=0,\\
T_1\td X+ab\td X'T_1^{-1}=0,\\
aY^{-1}T_1\td X+qb\td X'T_1^{-1}Y+ab=0,\\
\td X\td X'=0=\td X'\td X.
\end{split}
\label{147}
\end{equation}
If we compare the relations \eqref{147} with the relations \eqref{113}
for $c=0$ then we see that they are equivalent under the substitutions
$\td X=-XY^{-1}$, $ \td X'=-qa^{-2} YX'$.
Thus
\begin{equation*}
\wt\DAHA_{a,b;q}^{\rm LqJ}\lan T_1,Y,Y^{-1},-XY^{-1},-qa^{-2} YX'\ran
\simeq
\DAHA_{a,b;q}^{\rm LqJ}\lan T_1,Y,Y^{-1},X,X'\ran.
\end{equation*}
\subsection{Non-symmetric continuous dual $q$-Hahn polynomials
and Al-Salam-Chihara polynomials}
\label{154}
The non-symmetric versions of the continuous
dual $q$-Hahn can be obtained by setting $d=0$ in \eqref{40}, see \cite{M3}:
\begin{equation*}
E_n[z;a,b,c\,|\,q]:=\lim_{d\to0}E_n[z;a,b,c,d\,|\,q].
\end{equation*}
Then
\begin{equation}
\begin{split}
&E_n[z;a,b,c\,|\,q]=R_n[z;a,b,c\,|\,q]
-\frac{q^{1-n}(1-q^n)}
{(1-qab)(1-ab)(1-ac)}\\
&\qquad\times az^{-1} (1-az)(1-bz) R_{n-1}[z;qa,qb,c\,|\,q]\qquad
(n=0,1,2,\ldots),
\sLP
&E_{-n}[z;a,b,c\,|\,q]:=R_n[z;a,b,c\,|\,q]
-\frac{q^{1-n}(1-q^n ab)}
{(1-qab)(1-ab)(1-ac)}\\
&\qquad\times b^{-1}z^{-1} (1-az)(1-bz) R_{n-1}[z;qa,qb,c\,|\,q]\qquad
(n=1,2,\ldots),
\end{split}
\label{139}
\end{equation}
where $(1-q^n)E_{n-1}:=0$ for $n=0$.
\par
Corresponding to the presentation \eqref{112} of the corresponding DAHA
it is sufficient to deal with the generators $T_1,Y,Y'$ and $Z$
in its basic representation. For $T_1,Y$ and $Z$ put $d=0$ in
their formulas in \S\ref{143}.  This does not change the formulas \eqref{14}
for $Z$ and \eqref{15} for $T_1$. Formula \eqref{34} for $Y$ becomes
\begin{align*}
(Y f)[z]&=
\frac{z\big(1+ab-(a+ b)z\big)(c-z)}{(1-z^2)(q-z^2)}\,(f[z]-f[q z^{-1}])\\
&\quad+ \frac{(1-a z)(1-b z)(1-c z)}{(1-z^2)(1-q z^2)}\,(f[q z]- f[z^{-1}]).
\end{align*}
This was also given in \cite[Proof of Lemma 2.4]{M3}.
The formula for $Y'$ is obtained by putting $Y':=q^{-1}cdY^{-1}$
with $Y^{-1}$ given by \eqref{144} and then putting $d=0$:
\begin{align*}
&(Y'f)[z]=
\frac{z \big(a+b-(1+ab)z\big)(q-cz)}
{ab(1-z^2)(q-z^2)}\,f[z]\\
&\qquad\qquad\quad-\frac{z(aq-z)(bq-z)(c-z)}{ab(q-z^2)(q^2-z^2)}\,f[q^{-1}z]
+\frac{(1-a z)(1-b z)(q-cz)}
{ab(1-z^2)(q-z^2)}\,f[z^{-1}]\\
&\qquad\qquad\quad-\frac{qz\big((a+b)z-q(1+ab)\big)(c-z)}
{ab(q-z^2)(q^2-z^2)}\,f[qz^{-1}].
\end{align*}
Then, from \eqref{25} and also using the definition of $Y'$ we obtain
the eigenvalue equations
\begin{equation}
\begin{split}
YE_n&=0\qquad\qquad\qquad(n=0,1,2,\ldots),
\\
YE_{-n}&=q^{-n}\,E_{-n}\qquad\quad(n=1,2,\ldots),
\end{split}
\label{eq:eigenY}
\end{equation}
\begin{equation}
\begin{split}
Y'E_n&=q^{-n}a^{-1}b^{-1}\,E_n\quad(n=0,1,2,\ldots),
\\
Y'E_{-n}&=0\qquad\qquad\qquad\;\;(n=1,2,\ldots).
\end{split}
\label{145}
\end{equation}
Formulas \eqref{eq:eigenY} and \eqref{145} were earlier given in
\cite[Lemma 2.4]{M3}.
\par
As for the recurrence relation \eqref{101} with $M_n$ given by \eqref{102}
involving formula \eqref{103} for $\nu_{\td a ,q}(n)$, one can see from
\eqref{102} and \eqref{103} that the limit of $M_n$ for $d\to0$ exists,
where one has to distinguish between the cases $n\ge0$ and $n<0$.
We do not give the explicit formulas here.

Similar results for non-symmetric Al-Salam-Chihara polynomials
will simply follow by putting $c=0$ in the above formulas.
\subsection{Non-symmetric big and little $q$-Jacobi polynomials}
\label{153}
When taking the limit \eqref{68} to big $q$-Jacobi (and consequently
the limits \eqref{eq:l1}, \eqref{eq:l2} to little $q$-Jacobi), one
produces true polynomials rather than Laurent ones. By taking the same
limits of the non-symmetric AW polynomials \eqref{40}, one obtains
families of polynomials that are no longer functionally
independent. To overcome this difficulty we need to deal with the
$2$-D non-symmetric AW polynomials \eqref{20} and take limits of those.
Thus define the non-symmetric big $q$-Jacobi polynomials
\begin{equation}
\vv{E_n}(x;a,b,c;q):=
\lim_{\la\to0}\vv{E_n}
[\la^{-1}x;\la,qa\la^{-1},qc\la^{-1},bc^{-1}\la\,|\,q]\qquad(n\in\ZZ).
\label{158}
\end{equation}
Then
\begin{equation}
\begin{split}
\vv{E_n}(x)&=
\bma P_n(x;a,b,c\mid q)\sLP
\displaystyle-\frac{q^{1-n}(1-q^n)(1-q^{n}b)}
{(1-qa)(1-q^2a)(1-qc)}\,
P_{n-1}(q x,q^2 a,b,q c;q)\ema\quad
(n\ge0),\\
\vv{E_{-n}}(x)&=
\bma P_n(x;a,b,c\mid q)\sLP
\displaystyle-\frac{q^{-n}(1-q^{n+1} a)(1-q^{n+1}ab)}
{a (1-qa)(1-q^2 a)(1-qc)}  P_{n-1}(q x,q^2 a,b,q c;q) \ema
\quad(n\ge1),
\end{split}      
\label{20-BqJ}
\end{equation}
where $(1-q^n)P_{n-1}:=0$ for $n=0$.

We will deduce the  basic representation of $\DAHA_{a,b,c;q}^{\rm BqJ}$
by $2\times 2$ matrix-valued operators
from the one for $\DAHA$. We need to impose the substitution
\begin{equation}\label{eq:sub-AWBqJ}
sub=\{{z\to \la^{-1}x, a\to \la,b\to qa\la^{-1},c\to
qc\la^{-1},d\to bc^{-1}\la}\},
\end{equation}
defined in \eqref{68}, in the 2D realization of the basic
representation of $\DAHA$. However, when taking the limit 
as $\lambda\to 0$, we see that to obtain well defined matrix operators
we need to conjugate all operators by the diagonal matrix with entries
$1,\frac{1}{\lambda}$. Moreover, because $z\to \la^{-1}x$, we need to
multiply $Z$ by $\lambda$. Therefore, we introduce the
following rescalings of \eqref{5}, \eqref{6}
and \eqref{7}:
\begin{align}
\wt {\bf Z}:&=\la\bma 1&0\\0&1/\la\ema  {\bf Z}_{sub} \bma 1&0\\0&\la\ema,
\label{52}\\
(1-qa)x \wt Z_{11}&=
x^2+\la^2-x (\la^2+q a),
\nonu\\
 (1-qa)x \wt Z_{22}&=
\la^2 x-q a(\la^2+x^2-x),
\nonu\\
(1-qa)x^2\wt Z_{12}&=(x-\la^2)(q a x-\lambda^2)(x-1)
(x-qa ),
\nonu\\
(1-qa)\wt Z_{21}&=-1;
\nonu\\
\noalign{\allowbreak}
\wt{ {\bf Z}^{-1}}:&=\la\bma 1&0\\0&1/\la\ema  {\bf Z}^{-1}_{sub}
\bma 1&0\\0&\la \ema
=\bma \wt Z_{22}&-\wt Z_{12}\sLP-\wt Z_{21}&\wt Z_{11}\ema;
\label{54}\\
\noalign{\allowbreak}
\wt  {\bf T}_1:&=\bma 1&0\\0&1/\la\ema  {\bf T}_{1_{sub}} \bma 1&0\\0&\la\ema
=\bma -q a &0\\0&-1\ema;
\\
\noalign{\allowbreak}
\wt  {\bf Y}:&=\bma 1&0\\0&1/\la\ema  {\bf Y}_{sub} \bma 1&0\\0&\la\ema,
\label{40-0}
\end{align}
where $ {\bf Z}^{\pm 1}_{sub},  {\bf T}_{1_{sub}}$ and  ${\bf Y}_{sub}$
denote the operators in which we have performed the substitution
\eqref{eq:sub-AWBqJ}.
Denote the limits for $\la\to0$ by ${\bf X},{\bf X}',{\bf T}_1,{\bf Y}$,
respectively. Then we obtain:
\[
{\bf X}=\frac{1}{q a-1}\bma q a-x&q a(x-1)(q a-x)\\
1&q a(x-1)\ema,
\]
\[
{\bf X}'=\frac{1}{q a-1}\bma q a(x-1)&-q a(x-1)(q a-x)\\
-1&q a-x\ema,\quad
{\bf T}_1=\bma-q a&0\\0&-1\ema.
\]

For ${\bf Y}$, which we have not given explicitly, we obtain
from \eqref{21} that
\begin{equation}
\begin{split}
{\bf Y}\vv{E_{-n}}(x)&=q^{-n}\,\vv{E_{-n}}(x)\qquad\qquad(n=1,2,\ldots),\\
{\bf Y}\vv{E_{n}}(x)&=q^{n+1}a b\,\vv{E_{n}}(x)\qquad\;(n=0,1,2,\ldots).
\end{split}
\label{21-BqJ}
\end{equation}

Finally,  the analogue of \eqref{134} is:
\begin{equation}
\tilde M_n\Big(\vv{E_n}(x)\Big)=\mathbf{X}' \vv{E_n}(x),
\label{134b}
\end{equation}
where
\begin{equation}
\begin{split}
&\tilde M_n(g(n))
=\frac{\mu_{\td a ,q}(n)(a b \mu_{\td a ,q}(n)-c)
(a(1+b)q\mu_{\td a ,q}(n)-1-q a)}
{(a b \mu_{\td a ,q}(n)^2-1)(a b q \mu_{\td a ,q}(n)^2-1)}
\left(g(n)-g(-1-n)\right)\\
&\qquad\qquad\quad+\frac{(1-a q \mu_{\td a ,q}(n))(1-a b q \mu_{\td a ,q}(n))
(1-c q \mu_{\td a ,q}(n))}{(a b q^2 \mu_{\td a ,q}(n)^2-1)
(a b q \mu_{\td a ,q}(n)^2-1)} (g(n-1)-g(-n))\\
\end{split}
\end{equation}
and:
\begin{equation}
\mu_{\td a ,q}(n):=
\begin{cases}
(ab q^{1+n})^{-1},&n\ge0,\\
q^{-n},&n<0.
\end{cases}
\label{103b}
\end{equation}
This is proved straight from  \eqref{134} by substitution.

Similar results for non-symmetric little $q$-Jacobi polynomials
will simply follow by putting $c=0$ in the above formulas.
\begin{remark}
\label{161}
Clearly, from \eqref{68} there is a symmetry
$P_n(x;a,b,c)=P_n(x;c,ab/c,a)$.
{Hence, from \eqref{20-BqJ},
\small\begin{equation}
\begin{split}
\vv{E_n}(x;c,ab/c,a;q)&=
\bma P_n(x;a,b,c\mid q)\sLP
\displaystyle-\frac{q^{1-n}(1-q^n)(c-q^n ab)}
{c(1-qa)(1-qc)(1-q^2c)}\,
P_{n-1}(q x,qa,qb,q^2 c;q)\ema,\\
\vv{E_{-n}}(x;c,ab/c,a;q)&=
\bma P_n(x;a,b,c\mid q)\sLP
\displaystyle-\frac{q^{-n}(1-q^{n+1} c)(1-q^{n+1}ab)}
{c(1-qa)(1-qc)(1-q^2 c)}  P_{n-1}(q x,qa,qb,q^2 c;q) \ema
\end{split}
\label{159}
\end{equation}}

\noindent
for $n\ge0$ respectively $n>0$.
Note that the $q$-shifts in the parameters of the big $q$-Jacobi
polynomials occurring in the second coordinate in \eqref{159}
are different from the ones in \eqref{20-BqJ}.
The $q$-shifts in \eqref{159} are more in agreement with the
$q$-shifts in the vector-valued little $q$-Jacobi polynomials
discussed in \cite[\S6]{KB}.
In fact, the limit for $c\to0$ of
$\bma 1&0\\0&c\ema \vv{E_n}(x;c,ab/c,a;q)$ essentially gives
these polynomials \cite[(6.4), (6.5)]{KB}.

The polynomials \eqref{159} will also
be eigenfunctions of the $Y$ operator in the basic representation
of $\DAHA_{c,ab/c,a;q}^{\rm BqJ}$. A limit for $c\to0$ will be possible
in this eigenvalue equation (a little $q$-Jacobi case).
However, it is not clear at all if some decent algebra will result from
taking the limit
for $c\to0$ of $\DAHA_{c,ab/c,a;q}^{\rm BqJ}$.
\end{remark}
\subsection{Duality between degenerate cases of non-symmetric
AW polynomials}
By comparing \eqref{139} and \eqref{20-BqJ} we obtain for
$m,n\in\ZZ$ that
\begin{equation}
E_n(z_{a,q}(m)^{-1};a,b,c\,|\,q)
=\bma1&\mu_{ab,q}(n)\ema
\vv{E_m}(q^{-n};q^{-1}ab,ab^{-1},q^{-1}ac;q),
\label{141}
\end{equation}
where
\begin{equation*}
\begin{split}
\mu_{ab,q}(n)&:=abq^{-n}(1-q^n)\qquad(n=0,1,2,\ldots),\\
\mu_{ab,q}(-n)&:=q^{-n}(1-q^nab)\qquad(n=1,2,\ldots),
\end{split}
\end{equation*}
and $z_{a,q}(m)$ is given by \eqref{26}.
As in \eqref{140}, formula \eqref{141} has a matrix multiplication
of a row vector with a column vector on the right.
Formula \eqref{141} is also a limit case of \eqref{140}.

The duality \eqref{141} extends to a duality for the operators
acting on both sides as given in \S\S\ref{154}, \ref{153}.
These will come from the dualities of the degenerate DAHA's,
given in \S\ref{155}, in their basic representations.
Furthermore, everything can be specialized to the next level
in the $q$-Askey scheme by putting $c=0$.
\section{Summary of other related work and further perspective}
\label{162}
Necessarily, given the limited size of a journal article, we had to restrict
ourselves in the choice of material. This has resulted in a treatment
of material related to the part of the $q$-Askey
scheme depicted in Figure \ref{fig:1}. A more comprehensive study
of degenerate DAHA's associated with the ($q$-)Askey scheme would
have made links with cases already studied in the literature (often also
in higher rank):
\begin{itemize}
\item
Non-symmetric dual $q$-Krawtchouk polynomials are related to
Cherednik's one-dimenional nil-DAHA, see \cite{LT}. As we already
mentioned, this nil-DAHA is very close to the degenerate DAHA's
for Al-Salam-Chihara and little $q$-Jacobi considered in \S\ref{155}.
\item
Non-symmetric Wilson polynomials are related to a degenerate DAHA
considered in \cite{Groe2} and \cite{GVZ}.
\item
Non-symmetric Jacobi polynomials are related to
the case $n=1$ of
the dDAHA (degenerate DAHA) of type $BC_n$
considered in \cite[\S3.1]{EFM}.
\item
Non-symmetric Bessel functions (limit cases of non-symmetric Jacobi
polynomials) are related to a rational Cherednik algebra \cite{BEG}
of rank one.
\end{itemize}
Furthermore, we did not treat the
finite and infinite discrete families, with the $q$-Racah polynomials
on top, see also Remark \ref{160}. From the point of view of duality
such families were classified by Leonard \cite{L}. A further discussion
of the families arising from Leonard's classiifcation was given in
\cite{BI}, including the $q\to-1$ limit to the Bannai-Ito polynomials.
Recently a lot of work on $q\to-1$ limits has been done by Vinet, Zhedanov
and coauthors. See in particular \cite{GVZ}, where the $q\to-1$ degeneration
of the Zhedanov algebra associated with the Banna-Ito polynomials is
identified with the degenerate DAHA associated with the non-symmetric
Wilson polynomials.

In a different line of development the paper \cite{TVZ} introduced
analogues of Askey-Wilson polynomials which are orthogonal on the
unit circle, and constructed a DAHA associated with them.

An evident perspective for further work is to describe a full
($q$-)Askey scheme of non-symmetric orthogonal polynomials and the
associated degenerate DAHA's. Important questions here will be when
it is necessary to work with vector-valued polynomials rather than
Laurent polynomials, whether the orthogonality relations \eqref{138}
for vector-valued AW survive in the limit cases (for a few special cases
positively answered in \cite{KB}), and what the consequences are
when limits of nonsymmetric AW are taken with permuted parameters
(see Remark \ref{161}). The ``non-symmetric''
($q$-)Askey scheme should also be extended to non-polynomial cases
(cf.~\cite{KS}). All such work should finally get analogues in the higher
rank ($BC_n$) case.
\section*{Acknowledgments}
The authors are grateful to J. Stokman for useful conversations and to
the organisers of the $14$-th International Symposium on Orthogonal
Polynomials, Special Functions and Applications for inviting them to
give a talk, in particular, a plenary lecture by the second author.
We also thank Erik Koelink, Paul Terwilliger, and Alexei Zhedanov
for mentioning relevant references we had missed. Finally we thank a
referee for careful reading and for an important comment.
The research of M. Mazzocco was funded by EPSRC Research Grant
EP/P021913/1.

\end{document}